\documentclass[11pt,svgnames]{amsart}
\usepackage{amsfonts, amssymb, amsmath, amsthm}
\usepackage{mathrsfs} 
\usepackage{latexsym}
\usepackage{enumerate}
\usepackage{multicol}
\usepackage{verbatim}

\usepackage{url}
\usepackage{csquotes}
\usepackage{placeins}
\usepackage{epic}
\usepackage{graphicx}
\usepackage{epstopdf}
\usepackage{pstricks}
\usepackage{pst-plot}
  
\usepackage[colorlinks=true]{hyperref}
\hypersetup{citecolor=blue, linkcolor=blue, urlcolor= blue}
\usepackage{mathabx} 

\input amssym.def
\usepackage{amscd}
\usepackage[mathscr]{eucal}
\usepackage{palatino}
\usepackage{mathtools}

\DeclarePairedDelimiter{\floor}{\lfloor}{\rfloor}
\newfont{\cyrr}{wncyr10}
\usepackage{tikz-cd}
\usepackage[utf8]{inputenc}
\usepackage{amsmath}

\newcommand{\thmref}[1]{Theorem~\ref{#1}}

\newcommand{\lemref}[1]{Lemma~\ref{#1}}

\newcommand{\rO}{{{\rm{O} }}}

\newcommand{\Z}{{\mathbb Z}}
\newcommand{\Q}{{\mathbb Q}}

\newcommand{\R}{{\mathbb R}}

\newcommand{\m}{{\mathfrak m}}

\def\li{\operatorname{Li}}

\newtheorem{thm}{Theorem}

\newtheorem{lem}[thm]{Lemma}
\newtheorem{cor}[thm]{Corollary}

\newtheorem{defn}{Definition}

\title[Diophantine Approximation with primes in an arithmetic progression]
{Diophantine Approximation with primes in an arithmetic progression}
\author{D. Mazumder}
 \address[D. Mazumder]{GITAM (Deemed to be university), Gandhi Nagar, Rushikonda,Vishakapatnam, Andhra Pradesh - 530045, India.}
\email{dwmaz1993@gmail.com}
\author{J. Sivaraman}
  \address[J. Sivaraman]{Indian Institute of Science Education and Research Thiruvananthapuram, Vithura, Kerala - 595551, India.}
\email{jyothsnaas@gmail.com}
\subjclass[2010]{11B25,  11J17, 11J70, 11J71, 11J82, 11N05.}
\keywords{Diophantine Approximation, Distribution modulo one,  Primes, Arithmetic Progressions, Irrationality measure, Liouville numbers}
\begin{document}

\begin{abstract}
Let $\alpha \in \mathbb{R} \setminus \mathbb{Q}$, $\beta \in \R$, $N \in \mathbb{R}_{\ge 1}$ and $ \Delta \in (0, 1/2)$.
 For any real $y$, let $\|y\|$ denote the distance from $y$ to the nearest integer.
 Further, we use $\Lambda$ to denote the von Mangoldt function.
It is conjectured that for every $\epsilon \in \mathbb{R}_{>0}$, there are infinitely many primes $\ell$ such that
$
\| \alpha \ell \| < \ell^{-1 +\epsilon}.
$
In the first part of this paper, we show that
given two coprime integers $u, v \ge 1$,
there are infinitely many primes  $\ell \equiv u \bmod v$ such that 
$
 \|\alpha \ell - \beta\| \ll_v \ell^{-1/4} \log^{8} \ell.
$
In order to prove this result we first prove the following general theorem and then deduce the above as a corollary.
Before stating the result, let us define  a function on  $f_{\Delta}(\theta)$ on $\R$ such that $f_{\Delta}(\theta)$ is
$ 1 \text{ if } \| \theta \| < \Delta$ and
$ 0 $ otherwise. Further, suppose that $u, v \in \mathbb{Z}_{\ge 1}$ are coprime and $a$, $q \in \Z$ are coprime with $q> N^{1/4}$ and $|\alpha v -a/q| \le 1/q^2$.
Then, for every $\epsilon \in \mathbb{R}_{>0}$ we have
\begin{equation*}
\sum_{\substack{n=1 \\ n \equiv u \bmod v}}^N \Lambda(n) (f_{\Delta}(\alpha n - \beta) - 2\Delta) \ll_v (Nq^{-1/2} + N^{3/4} + N^{5/6}\Delta^{1/2} + (\Delta Nq)^{1/2} +  N^{\epsilon}q \Delta^{1-\epsilon}) \mathcal{L}^8 
\end{equation*}
where $\mathcal{L}=\log (Nq/\Delta)$.
Setting  $N=q^2$ and $\Delta = CN^{-1/4} \log^{8} N$ for a sufficiently large
constant $C>0$, we deduce the result claimed above.
This generalises a well known result of Vaughan from 1977 proving a similar bound for the sum
\begin{equation*}
\sum_{\substack{n=1}}^N \Lambda(n) (f_{\Delta}(\alpha n - \beta) - 2\Delta).
\end{equation*}
In the second part of this paper, we explicitly construct  an uncountable set $S \subset \R\setminus \Q$ such that for every $\gamma \in S$
there are infinitely many primes $\ell \equiv u \bmod v$ satisfying
$\| \gamma \ell \| < \ell^{-1}$. 
Further we prove unconditionally that not
all the elements of $S$ are Liouville numbers.
This addresses a question of Erd{\"o}s and Mahler from 1939.
\end{abstract}
\maketitle
\section{Introduction and statements of the results}

Let $\alpha \in \mathbb{R} \setminus \mathbb{Q}$, $\beta \in \R$, $\delta \in \mathbb{R}_{>0}$, $N \in \mathbb{R}_{\ge 1}$ and $ \Delta \in (0, 1/2)$.
For any real $y$, let $\|y\|$ denote the distance from $y$ to the nearest integer. 
Throughout this article, we set $e(x):=e^{2\pi i x}$. We shall use $\Lambda(n)$ to denote the von Mangoldt function evaluated at $n \in \mathbb{N}$.
In 1954, Vinogradov \cite{GH} proved that for every $\epsilon \in \mathbb{R}_{>0}$ there are infinitely many primes $\ell$ such that $\|\alpha \ell\| < \ell^{-1/5 +\epsilon}$.
This result was improved by Vaughan \cite{RCV}  in 1977. Vaughan's result states the following.
Let $a,q \in \mathbb{Z}$ be coprime with $q > 0$ and $|\alpha -a/q| \le 1/q^2$.
Vaughan defined a $1$ periodic  function $g_{\Delta}(\theta)$ on $\R$ such that for $\theta \in [-1/2,1/2)$, $g_{\Delta}(\theta)$ is 
$ 1 \text{ if } \theta  \in  [-\Delta, \Delta)$ and
$ 0$ otherwise.
Then, for every $\epsilon \in \mathbb{R}_{>0}$, we have
\begin{equation*}
\sum_{\substack{n=1}}^N \Lambda(n) (g_{\Delta}(\alpha n - \beta) - 2\Delta) \ll    (Nq^{-1/2} + N^{3/4} + (\Delta Nq)^{1/2})\log^8 (Nq/\Delta) +   \Delta^{2/5} N^{4/5} (Nq/\Delta)^{\epsilon}.
\end{equation*}
Setting  $N=q^2$ and $\Delta = CN^{-1/4} \log^{8} N$ for a sufficiently large
constant $C>0$, we get that 
there are infinitely many primes $\ell$ such
$
\| \alpha \ell - \beta \| \ll \ell^{-1/4} \log^8 \ell.
$
It is conjectured that  for every $\epsilon \in \mathbb{R}_{>0}$, there are infinitely many primes $\ell$ such that $\|\alpha \ell\| < \ell^{-1 +\epsilon}$. 
In 1996, Harman \cite{GHII} proved that 
 there are infinitely many primes $\ell$ such
$
\| \alpha \ell \| < \ell^{- 7/22}$.
In 2002,
 Heath-Brown and  Jia \cite{HBJ} improved upon Harman's result.
They showed that for every $\epsilon \in \mathbb{R}_{>0}$, there are infinitely many primes $\ell$ such that $\|\alpha \ell\| < \ell^{-16/49 + \epsilon}$. 
The best known result in this direction is due to Matom{\"a}ki \cite{KM}. Matom{\"a}ki proved that for every $\epsilon \in \mathbb{R}_{>0}$,
 there are infinitely many primes $\ell$ such
$
\| \alpha \ell \| < \ell^{-1/3 + \epsilon}.
$
However the techniques of Harman, Heath-Brown and Jia and Matom{\"a}ki only produce
a lower bound for the number of primes $\ell \le N$
for which 
$
\| \alpha \ell \| < \ell^{-\delta}
$
for the corresponding values of $\delta$ and $N \gg 1$
while the result of Vaughan produces an asymptotic.
Throughout the article, ${\rm O}_v$ or $\ll_v$ will be used to denote that
the implied constant depends on $v$.
The main result of this paper is the following.
\begin{thm}\label{asymp}
Let $\alpha \in \R\setminus \Q$, $\beta \in \R$ and $u$, $v \in \mathbb{N}$ with $(u,v)=1$ be fixed. Further, let $N\in \R_{\ge 1}$ and $\Delta \in (0, 1/2)$.
Suppose that $a$, $q \in \Z$ are coprime with $q> N^{1/4}$ and $|\alpha v -a/q| \le 1/q^2$.
Set $\mathcal{L} = \log (Nq/\Delta)$. We define  a  function 
$f_{\Delta}(\theta)$ on $\R$ such that $f_{\Delta}(\theta)$ is 
$ 1 \text{ if } \| \theta \| < \Delta$ and
$ 0$ otherwise.
Then for any $\epsilon \in \R_{>0}$, we have
$$
\sum_{\substack{n=1 \\ n \equiv u \bmod v}}^N \Lambda(n) (f_{\Delta}(\alpha n - \beta) - 2\Delta) \ll_v  \mathcal{L}^8  (N/q^{1/2} + N^{3/4} + N^{5/6}\Delta^{1/2} + (\Delta Nq)^{1/2} +  N^{\epsilon}q \Delta^{1-\epsilon}).
$$
Here $\Lambda(n)$ is used to denote the von Mangoldt function evaluated at $n \in \mathbb{N}$.\end{thm}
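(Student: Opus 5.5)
We follow Vaughan's 1977 argument, adapted to the progression $n\equiv u \bmod v$. The plan has four steps: approximate $f_\Delta$ by finite Fourier series, detect the progression through the substitution $n=u+vm$, bound the resulting exponential sums over primes with Vaughan's identity, and sum the Fourier coefficients against these bounds.

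\textbf{Step 1: Fourier approximation.} We replace $f_\Delta$ by a finite trigonometric polynomial using Vaaler's (or Vinogradov's) majorant and minorant. With a parameter $K$ (to be chosen), there are trigonometric polynomials $f^{\pm}$ of degree $K$ such that $f^-\le f_\Delta\le f^+$. Their coefficients satisfy $\hat f^{\pm}(0)=2\Delta+{\rm O}(1/K)$ and $|\hat f^{\pm}(k)|\ll \min(\Delta,1/|k|)$. It therefore suffices to bound
\[
\frac{N}{K}+\sum_{1\le |k|\le K}\min(\Delta,|k|^{-1})\Bigl|\sum_{\substack{n\le N\\ n\equiv u \bmod v}}\Lambda(n)e(k\alpha n)\Bigr|.
\]

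\textbf{Step 2: detecting the progression.} We write $n=u+vm$ with $0\le m\le (N-u)/v$. Then $e(k\alpha n)=e(k\alpha u)\,e(k(\alpha v)m)$, so the inner sum becomes $\sum_{m}\Lambda(u+vm)e(k\alpha v\, m)$. This is a sum over primes in the progression, twisted by the phase $\alpha v$. The rational approximation $a/q$ is to $\alpha v$, which is why the hypothesis is stated in that form.

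To return to a sum over all $n$, we use additive characters modulo $v$:
\[
\mathbf{1}_{n\equiv u \bmod v}=\frac1v\sum_{j=0}^{v-1}e\Bigl(\frac{j(n-u)}{v}\Bigr),
\]
so the inner sum is a combination of $\sum_{n\le N}\Lambda(n)e(n(k\alpha+j/v))$. For each $j$ we need a rational approximation to $k\alpha+j/v$. From $|\alpha v-a/q|\le q^{-2}$ we get $|(k\alpha+j/v)-(ka+jq)/(qv)|\le k/(vq^2)$. After removing common factors, the denominator of $(ka+jq)/(qv)$ is comparable to $qv/(k,q)$ up to factors depending on $v$, and this dependence is absorbed into $\ll_v$.

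\textbf{Step 3: the exponential sum bound.} For each such phase we apply Vaughan's identity with $U=V=N^{2/5}$, or with $N^{1/3}$. Vaughan's classical bound, extended to approximations of the form $|\theta-b/r|\le k/r^2$ (handled through the usual $\min(N/d,\|d\theta\|^{-1})$ sums), gives
\[
\sum_{n\le N}\Lambda(n)e(n\theta)\ll \mathcal L^4\Bigl(\frac{N}{\sqrt r}\sqrt{1+\frac{kN}{r^2}}\cdots+N^{4/5}+\sqrt{Nr}\Bigr),
\]
where the error is in the form depending on $k$.

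\textbf{Step 4: summing over $k$.} We group $k$ dyadically, $k\sim 2^i$, and use the standard estimate for $\sum_{k\le K}\min(N/k,\|k\alpha v\|^{-1})$-type averages. This is where $q>N^{1/4}$ is used, to control the terms with $(k,q)$ large.

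The main obstacle is this $k$-averaging with the weight $\min(\Delta,1/k)$. To avoid the loss of $\Delta^{2/5}N^{4/5}$ in Vaughan's estimate, we treat the type I and type II sums separately:
\begin{itemize}
\item For the type II bilinear sums we apply Cauchy--Schwarz jointly over $k$ and the variables, and sum over $k$ inside. This should produce $N^{5/6}\Delta^{1/2}$, by optimising $U$ in terms of $\Delta$.
\item Terms with $k$ near a multiple of $q$ produce $N^\epsilon q\Delta^{1-\epsilon}$, via a divisor bound when counting $k\le K$ with $k\alpha v$ close to an integer.
\end{itemize}
Finally we choose $K=\Delta^{-1}N^{\epsilon}$, so that the $N/K$ term is dominated. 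Collecting the terms gives the stated bound with $\mathcal L^8$.
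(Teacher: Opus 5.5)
There is a genuine gap: the frequency-averaged exponential-sum bound that the whole theorem rests on is never actually supplied.

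Your Steps 2--3 treat each frequency separately. You attach a rational approximation to each phase $k\alpha+j/v$ and then apply a single-sum bound for $\sum_{n\le N}\Lambda(n)e(n\theta)$. That route cannot reach the theorem. Vaughan's single-sum bound is $Nr^{-1/2}+N^{4/5}+(Nr)^{1/2}$ up to logarithms. So the frequencies $k\le\Delta^{-1}$, each with weight $\Delta$, already cost $N^{4/5}+(Nq)^{1/2}$. For $q\asymp N^{1/2}$ and $\Delta=N^{-1/4}$ (the regime of the corollary) the right-hand side is $\asymp N^{3/4}\mathcal{L}^8$, so this is too large. In general $(Nq)^{1/2}$ also exceeds the target term $(\Delta Nq)^{1/2}$ by a factor $\Delta^{-1/2}$.

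Every term of the theorem that carries a positive power of $\Delta$ comes instead from a bound averaged over the frequencies,
\[
\sum_{h\le H}\Bigl|\sum_{\substack{n\le N\\ n\equiv u \bmod v}}\Lambda(n)e(\alpha hn)\Bigr|\ll_v (\log 2HN)^{7}\bigl(HNq^{-1/2}+HN^{3/4}+H^{1/2}N^{5/6}+(HNq)^{1/2}+(HN)^{\epsilon}q\bigr).
\]
This is followed by partial summation against $\min(\Delta,1/h)$, which evaluates the sublinear terms at $H\approx\Delta^{-1}$. Your Step 4 recognises the obstacle but only asserts that ``Cauchy--Schwarz jointly over $k$'' resolves it. This averaged bound is the entire content of the paper's Theorem~2, and you have not supplied it. The mechanism in the paper is:
\begin{itemize}
\item Use Vaughan's identity with the fixed choice $U=V=N^{1/3}$.
\item Split every arithmetic variable into residue classes mod $v$ and write it as $x_0+vx_1$, so the phase becomes $\alpha v$ times an integer product. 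This is what makes the congruence compatible with averaging over $h$. Your additive-character expansion into separate phases $k\alpha+j/v$, as you use it, leads back to the per-frequency treatment.
\item In the type I sums, merge $h$ with the arithmetic variables into a new variable with divisor-bounded weights, $s=hmd$ or $w=hd$. The type I lemma then charges $+q$ only once for all $h$, which gives $(HN)^{\epsilon}q$ and hence $N^{\epsilon}q\Delta^{1-\epsilon}$.
\item In the type II sum, take $w=hk$ when $k\le N^{1/2}$ and $w=hm$ when $k>N^{1/2}$, i.e.\ always merge $h$ with the shorter variable. Merging with $k\approx N^{2/3}$ would cost $HN^{5/6}$.
\end{itemize}
The term $N^{5/6}\Delta^{1/2}$ is simply the type II contribution $H^{1/2}N/V^{1/2}$ at $V=N^{1/3}$; it does not come from optimising $U$ in terms of $\Delta$. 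Likewise $N^{\epsilon}q\Delta^{1-\epsilon}$ does not come from counting $k$ near multiples of $q$.

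Separately, your choice $K=\Delta^{-1}N^{\epsilon}$ is too small. The zero-frequency error $N/K=\Delta N^{1-\epsilon}$ is not dominated by the right-hand side. For example, with $q=N^{1/2}$, $\Delta=N^{-1/5}$ and $\epsilon$ small it equals $N^{4/5-\epsilon}$, against a right-hand side of order $N^{3/4}\mathcal{L}^8$. You need a much larger degree: the paper takes $H=1+\lfloor Nq/\Delta\rfloor$, which makes this error $\ll\Delta/q$. You must then control frequencies beyond $q$, where your per-$k$ approximations $(ka+jq)/(qv)$ are of no use. In the averaged setting the range $H>q$ is immediate from a single application of the type II lemma with $h$ itself as the outer variable.
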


As a corollary, we obtain the following result.
\begin{cor}\label{mainthm}
Suppose $\alpha \in \R\setminus \Q$, $\beta \in \R$ and $u$, $v \in \mathbb{N}$ with $(u,v)=1$ are fixed.
There exist infinitely many primes  $\ell \equiv u \bmod v$ such that 
\begin{equation}
 \|\alpha \ell - \beta\| \ll_v \ell^{-1/4} \log^{8} \ell.
\end{equation}
Suppose that $a$, $q \in \Z$ are coprime with $q> N^{1/4}$ and $|\alpha v -a/q| \le 1/q^2$.
Set $\mathcal{L} = \log (Nq/\Delta)$.
Then, for any $\Delta$ in $(0,1/2)$ and any $\epsilon>0$, 
we have
\begin{multline*}
|\{ 1 \le \ell \le N~ : ~\ell \text{ prime},~\ell \equiv u \bmod v,~ \| \alpha \ell - \beta\| < \Delta\}|   = 
{2\Delta \pi(N,u,v)} \\
 + 
{\rm O}_v(\mathcal{L}^7  ( N/q^{1/2} + N^{3/4} + N^{5/6}\Delta^{1/2} + (\Delta Nq)^{1/2} + N^{\epsilon}q \Delta^{1-\epsilon})).
\end{multline*}
Here $\pi(N,u,v)$ is used to denote the number of primes $\ell \le N$ such that $\ell \equiv u 
\bmod v$.
\end{cor}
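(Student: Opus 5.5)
The corollary should follow from Theorem~\ref{asymp} in two steps. The first converts the weighted sum over $\Lambda(n)$ into a count of primes. The second chooses the parameters $N$ and $\Delta$ along a sequence of convergent denominators $q$ of $\alpha v$.

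\textbf{The asymptotic count.} I would first strip off the prime powers. The contribution to the sum in Theorem~\ref{asymp} from $n=p^k$ with $k\ge 2$ is trivially $\ll N^{1/2}\log N$, and this is absorbed by the term $N^{3/4}$. Hence
\begin{equation*}
\sum_{\substack{p\le N\\ p\equiv u \bmod v}}\log p\,\bigl(f_\Delta(\alpha p-\beta)-2\Delta\bigr)\ll_v \mathcal{L}^8 E,
\end{equation*}
where $E=N/q^{1/2}+N^{3/4}+N^{5/6}\Delta^{1/2}+(\Delta Nq)^{1/2}+N^{\epsilon}q\Delta^{1-\epsilon}$. To remove the weight $\log p$, I would apply partial summation. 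The theorem holds uniformly for every $t\le N$ in place of $N$, and the hypothesis $q>t^{1/4}$ stays valid since $t\le N$. So the function $S(t)=\sum_{p\le t,\,p\equiv u (v)}\log p\,(f_\Delta-2\Delta)$ satisfies $S(t)\ll_v \mathcal{L}^8 E(N)$, because $E$ is increasing in $N$ and $\log(tq/\Delta)\le\mathcal{L}$. Then
\begin{equation*}
\sum_{p\le N}(f_\Delta-2\Delta)=\frac{S(N)}{\log N}+\int_2^N\frac{S(t)}{t\log^2 t}\,dt .
\end{equation*}
The first term is fine. For the integral, I would split at $t=N^{1/2}$. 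The piece $t\le N^{1/2}$ is bounded trivially by $\ll N^{1/2}$. On the piece $t>N^{1/2}$ we have $\log t\asymp\log N$, and this saves one logarithm, giving the exponent $\mathcal{L}^7$. If $\log N$ is much smaller than $\mathcal{L}$, the bound with $\mathcal{L}^7$ still follows in the weaker form via $\mathcal{L}^8/\log N$. The careful version splits dyadically and uses the dyadic value $E(t)$; this is routine. Finally, $\sum_{p\le N,\,p\equiv u}2\Delta=2\Delta\pi(N,u,v)$, which is the main term.

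\textbf{Existence of primes.} Since $\alpha v$ is irrational, there are infinitely many coprime pairs $(a,q)$ with $|\alpha v-a/q|\le q^{-2}$ and $q\to\infty$. For each such $q$, I would take $N=q^2$, so that $q>N^{1/4}$ holds. I would also take $\Delta=CN^{-1/4}\log^8N$. Then $\mathcal{L}\asymp\log N$, and each error term, multiplied by $\mathcal{L}^8$, can be compared with the main term $2\Delta\sum\Lambda\asymp_v \Delta N$, using the prime number theorem in progressions for fixed $v$:
\begin{itemize}
\item $N/q^{1/2}=N^{3/4}$;
\item $N^{5/6}\Delta^{1/2}\ll N^{17/24+\epsilon}$;
\item $(\Delta Nq)^{1/2}\ll N^{5/8+\epsilon}$;
\item $N^{\epsilon}q\Delta^{1-\epsilon}\ll N^{1/4+\epsilon}$.
\end{itemize}
Hence the total error is $\ll_v N^{3/4}\log^8N$. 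The main term is $\asymp_v CN^{3/4}\log^8N$, so for $C$ large in terms of $v$ it dominates. Therefore $\sum_{n\le N,\,n\equiv u}\Lambda(n)f_\Delta(\alpha n-\beta)>N^{1/2}\log N$, and prime powers cannot account for all of it. So there is a prime $\ell\le N$ with $\ell\equiv u \bmod v$ and $\|\alpha\ell-\beta\|<\Delta$.

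\textbf{The main obstacle.} The step needing care is ensuring that $\ell$ is large, so that $\Delta\ll\ell^{-1/4}\log^8\ell$ and so that we get infinitely many distinct primes. I would count primes in $(N^{1-\eta},N]$ instead of $[1,N]$. The number of such primes in $[1,N^{1-\eta}]$ is trivially $\le N^{1-\eta}$, which is $o(N^{3/4})$ for $\eta>1/4$; for instance $\eta=1/3$ works. The surviving prime then satisfies $\ell\ge N^{2/3}$. However, this only gives $\Delta\ll \ell^{-3/16}$, which is too weak. So I would instead lower-bound the count on $(N/2,N]$ directly, by applying the asymptotic at $N$ and at $N/2$ and subtracting. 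Both values are legitimate since $q>N^{1/4}$. The main terms give $\gg_v\Delta N$, while the errors are $\ll_v N^{3/4}\log^8N$, so a prime $\ell\in(N/2,N]$ exists. Then $\|\alpha\ell-\beta\|<\Delta\ll_v\ell^{-1/4}\log^8\ell$. Letting $q\to\infty$ gives infinitely many such primes.
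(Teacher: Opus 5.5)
Your proposal is correct at the level of rigor of the paper and follows the paper's route. For the count, you pass from the $\Lambda$-weighted sum of Theorem~\ref{asymp} to primes by discarding prime powers and using partial summation, applying the theorem at every $t\le N$ (legitimate since $q>t^{1/4}$); for existence, you specialize to $N=q^2$, $\Delta=CN^{-1/4}\log^8N$ along denominators $q$ of good approximations to $\alpha v$. The paper does the latter in one line, and you are more careful about prime powers and about forcing $\ell$ to be large.

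There are two minor slips. First, restricting to $\ell\in(N^{2/3},N]$ already suffices: $\ell\le N$ gives $N^{-1/4}\le\ell^{-1/4}$, and $\log N<\frac32\log\ell$. Your $\ell^{-3/16}$ is a miscalculation, though your $(N/2,N]$ device is also fine. Second, the partial summation actually yields $\mathcal{L}^8E/\log N+O(N^{1/2})$, which matches the stated $\mathcal{L}^7E$ only when $\mathcal{L}\ll\log N$. Your remark that $\mathcal{L}^7$ ``still follows'' otherwise is not right, but the paper's estimate $\mathcal{L}^8/\log^2 t\ll\mathcal{L}^6$ tacitly makes the same assumption.
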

This is the immediate analogue of Vaughan's \cite{RCV} result in this setup. 
An important result required to prove \thmref{asymp}
is the following bound on exponential sums.
\begin{thm}\label{expsumbound}
Let $\alpha \in \R\setminus \Q$ and $u$, $v \in \mathbb{N}$ with $(u,v)=1$ be fixed. Further, let  $H$ and $N \in \R_{\ge 1}$.
Suppose that $a$, $q \in \Z$ are coprime with $q>N^{1/4}$ and $|\alpha v -a/q| \le 1/q^2$.
Set $\mathcal{L}_1 = \log 2HN$.
Then for any $\epsilon \in \R_{>0}$, we have
$$
D_1:=  \sum_{h=1}^H  \bigg|\sum_{\substack{n=1 \\ n \equiv u \bmod v}}^N \Lambda(n) e( \alpha hn) \bigg| \ll_v  \mathcal{L}_1^{7}(HN/q^{1/2} + HN^{3/4} + H^{1/2}N^{5/6} + (HNq)^{1/2} + (HN)^{\epsilon}q).
 $$
Again, $\Lambda(n)$ is used to denote the von Mangoldt function evaluated at $n \in \mathbb{N}$.\end{thm}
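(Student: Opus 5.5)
The plan is to reduce Theorem~\ref{expsumbound} to a Vaughan-type bilinear analysis for the progression $n=u+vm$, with $\alpha v$ playing the role of the frequency. For each $h$, detect the congruence by writing $n = u + vm$. Then
\[
e(\alpha h n) = e(\alpha h u)\, e(\alpha v h m),
\]
so the inner sum becomes $\sum_{m \le (N-u)/v} \Lambda(u+vm)\, e(\alpha v h m)$ up to a unimodular factor. The complication is that $\Lambda(u+vm)$ is not a convolution in $m$. We therefore instead keep $n$ and use Dirichlet characters:
\[
\mathbf 1_{n\equiv u \bmod v} = \frac{1}{\varphi(v)}\sum_{\chi \bmod v}\bar\chi(u)\chi(n) \quad \text{for } (n,v)=1.
\]
The prime powers dividing $v$ contribute only $O_v(\mathcal L_1)$ per $h$, hence $O_v(H\mathcal L_1)$ in total, which is absorbed. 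It then suffices to bound
\[
\sum_{h\le H}\Big|\sum_{n\le N}\Lambda(n)\chi(n)e(\alpha h n)\Big|
\]
for each of the $\varphi(v)=O_v(1)$ characters.

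For a fixed character we apply Vaughan's identity with parameters $U=V$, which is legitimate because $\chi$ is completely multiplicative and so $\Lambda\chi = (\mu\chi)*(\log\cdot\chi)$ splits the same way. This produces Type I sums $\sum_{d\le U}\mu(d)\chi(d)\sum_{r\le N/d}\chi(r)\log r\, e(\alpha h d r)$ and Type II sums $\sum_{U<m\le N/U}\sum_{k} a_m b_k \chi(mk)e(\alpha h m k)$.

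In the Type I sums the character on $r$ is removed by splitting $r$ into residue classes $r\equiv c \bmod v$, writing $r=c+vs$. Each phase is then $e(\alpha v\, h d s)$ times a constant, and the inner sum over $s$ is a geometric-type sum bounded, after partial summation for the $\log$, by $\min(N/(dv), \|\alpha v h d\|^{-1})$. Grouping $hd=j\le HU$ with divisor-function weight $\tau(j)\ll (HN)^\epsilon$ or $\log$-averaging, we use the standard lemma
\[
\sum_{j\le J}\min\big(X/j,\ \|\gamma j\|^{-1}\big)\ll (X/q + J + q)\log(2Xq)
\]
with $\gamma=\alpha v$. This gives contributions $HN/q + HU + q$ times logs.

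In the Type II sums, $\chi(mk)=\chi(m)\chi(k)$ is absorbed into the coefficients. The characters are harmless there; the only issue is that the frequency is $\alpha$, not $\alpha v$. To fix this, split $m$ and $k$ into residue classes mod $v$, so that $mk = (c_1+vm')(c_2+vk')$. The cross term $v^2 m'k'$ has phase $\alpha v\cdot v m' k'$. Applying Cauchy–Schwarz over $(h,m)$ and expanding the square in $k$, the sums become
\[
\sum_{h,m'}\min\big(N/m,\ \|\alpha v\, h v (k_1'-k_2')\|^{-1}\big),
\]
and the factor $v$ costs only $O_v(1)$ through the Diophantine approximation of $\alpha v^2$, obtained from that of $\alpha v$ with $q$ replaced by $q/(q,v)$. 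The standard dyadic estimate then yields $(HN)^{1/2}\big(N q^{-1/2}+N U^{-1/2}+(Nq)^{1/2}\big)\mathcal L_1^{O(1)}$ up to the $H$-factors, mirroring Vaughan's 1977 argument. Choosing $U=N^{1/4}$ or $U=N^{1/3}$, and using $q>N^{1/4}$ to discard the degenerate ranges, produces the terms $HN^{3/4}$, $H^{1/2}N^{5/6}$ and $(HNq)^{1/2}$.

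The main obstacle I expect is the Type II bookkeeping, specifically getting $H^{1/2}N^{5/6}$ rather than a worse power of $H$. This requires performing Cauchy–Schwarz jointly over $h$ and $m$, and counting the resulting solutions of $h(k_1-k_2)=j$ with divisor bounds (hence the $(HN)^\epsilon q$ term). The residue-class splitting also has to be organized so that every phase is a multiple of $\alpha v$, where the hypothesis on $q$ applies.
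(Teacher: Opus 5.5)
Your reduction (characters, Vaughan's identity, and Type I via residue classes plus the $\min(X/j,\|\alpha v j\|^{-1})$ lemma) is fine. The Type II step, however, has a genuine gap, exactly where you flag the main obstacle.

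\textbf{Where the proposed Type II argument fails.} Counting solutions of $h(k_1-k_2)=j$ means that after Cauchy--Schwarz over $(h,m)$ and expanding in $k$, the geometric sum runs over $m$ alone, with length $M$:
\[
\sum_{h,m}\Big|\sum_k b_k e(\alpha hmk)\Big|^2\le\sum_{k_1,k_2}|b_{k_1}b_{k_2}|\sum_{h\le H}\min\big(M,\|\alpha h(k_1-k_2)\|^{-1}\big).
\]
The diagonal gives $HM\sum_k|b_k|^2$. In the off-diagonal, the term $J=HK$ of $\sum_{j\le J}\min(M,\|\gamma j\|^{-1})\ll (JM/q+J+M+q)\log(2Jq)$ contributes about $HK^2$. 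After multiplying by $(H\sum_m|a_m|^2)^{1/2}$ you get $H(NM)^{1/2}+H(NK)^{1/2}$. So the full factor $H$ sits on $(N\max(M,K))^{1/2}$, which reaches $HN^{5/6}$ when $U=V=N^{1/3}$.

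This cannot be repaired within your scheme. Cauchy--Schwarz over $(h,k)$ only swaps the roles of $m$ and $k$. Re-optimising $U$ against the Type I term $HUV$ only reaches $HN^{4/5}$.

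The loss is not covered by the theorem. Take $q=N^{1/2}$ and $H=N^{1/4}$: every term on the right is $\ll N\mathcal{L}_1^7$, whereas $HN^{5/6}=N^{13/12}$. In addition, the pointwise bound $\tau(j)\ll (HN)^\epsilon$ in Type II would put a factor $(HN)^\epsilon$ on $HN/q^{1/2}$ and $(HNq)^{1/2}$, which the statement does not allow. Your vague ``up to the $H$-factors'' summary is where all of this is hidden.

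\textbf{The missing idea.} In the paper, $h$ is never a Cauchy--Schwarz variable.
\begin{enumerate}
\item Replace the absolute values by unimodular weights $c_h$.
\item Merge $h$ multiplicatively into the \emph{shorter} Vaughan variable: $w=hk$ when $K\ll N^{1/2}$, and $w=hm$ when $K\gg N^{1/2}$.
\item The merged coefficients are bounded by $\tau_3(w)$, respectively $\sum_{d\mid w}\Lambda(d)$. Their mean squares are $\ll HK\mathcal{L}_1^8$, respectively $\ll (HN/K)\mathcal{L}_1^2$, so only logarithms are lost.
\item Apply Lemma~\ref{Vaughan2} with $\alpha_1=\alpha v$, writing only the remaining variable in residue classes mod $v$. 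No $\alpha v^2$ ever appears.
\end{enumerate}
Now the geometric sums have length $HK$ (respectively $HM$), and the bound becomes
\[
HN/q^{1/2}+H(N\min(M,K))^{1/2}+H^{1/2}(N\max(M,K))^{1/2}+(HNq)^{1/2},
\]
which gives $HN^{3/4}+H^{1/2}N^{5/6}$. The coupling $mk\le N$ is handled by the maximum built into Lemma~\ref{Vaughan2} when $w=hk$, and by Lemma~\ref{perron} when $w=hm$.

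\textbf{Where the $\epsilon$-loss belongs.} The $\epsilon$-loss then comes only from the Type I sums. Absorbing it into the $(HN)^\epsilon q$ term requires first reducing to $H\le q<N$ (together with $q>N^{1/4}$). The cases $H>q$ and $q>N$ are settled directly by Lemma~\ref{Vaughan2} with $x=h$ and $y$ the progression variable. Your proposal leaves this reduction implicit.
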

In 1939, Erd{\"o}s and Mahler \cite{EM} studied the properties of convergents of continued fractions
of numbers. In this work, they state that it is easy to find Liouville numbers for which every convergent 
has prime denominator. However they require the Riemann hypothesis to show the existence of non-Liouville
numbers with this property.
In the last section of our paper, we explicitly construct an uncountable set $S \subset \R\setminus \Q$ such that for every $\gamma \in S$
there are infinitely many primes $\ell \equiv u\bmod v$ such that 
$\| \gamma \ell \| < \ell^{-1}$. We conclude by proving unconditionally that not
all the elements of $S$ are Liouville numbers.

The article is divided into four sections. In Section \ref{prereq}, we recall some well known
results which will be used in the proof of our theorems.
In Section \ref{proofofmainthm}, we prove Theorems \ref{expsumbound},  \ref{asymp} and Corollary \ref{mainthm}.
Finally in Section \ref{examples}, we produce 
uncountably many explicit examples of irrational numbers $\gamma$
for which  $
\|\gamma \ell\|< \ell^{-1}
$
is satisfied by infinitely many primes $\ell \equiv u \bmod v$.
We also show that not all of these numbers are Liouville.

\section{Prerequisites}\label{prereq}
In this section, we state the results used in the proof of our theorems.
We begin 
with the following type I and type II estimates
of Vaughan from \cite{RCV}. The type I estimate is as follows.
\begin{lem} \label{Vaughan1}  {\rm[Vaughan, \cite{RCV}]}
Let  $X,Y \ge 1$ and $\alpha_1$ be real numbers. Further suppose that
 $a,q$ are integers
satisfying $(a,q)=1$, $q>0$ and $|\alpha_1 - a/q| \le 1/q^2$. Set $\mathcal{L}_2 = \log 2XYq$. Now consider
$$
T = \sum_{x \le X} \max_{Z \le XY/x} \bigg| \sum_{y \le Z} a_x e(\alpha_1 xy) \bigg|
$$
where the $a_x$ are complex numbers.
Then 
$$
T \ll \mathcal{L}_2 (XYq^{-1} +X + q) \max_{x \le X} |a_x|. 
$$
\end{lem}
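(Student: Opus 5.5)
For each fixed $x\le X$, the inner sum $\sum_{y\le Z} e(\alpha_1 xy)$ is a geometric series, and we use the standard bound
$$
\Bigl|\sum_{y \le Z} e(\alpha_1 x y)\Bigr| \le \min\bigl(Z, \tfrac{1}{2\|\alpha_1 x\|}\bigr).
$$
Pulling out $\max_{x\le X}|a_x|$ and using $Z\le XY/x$, we obtain
$$
T \le \max_{x \le X}|a_x| \sum_{x\le X} \min\Bigl(\frac{XY}{x}, \frac{1}{\|\alpha_1 x\|}\Bigr).
$$
It therefore suffices to show
$$
\sum_{x \le X} \min\Bigl(\frac{XY}{x}, \frac{1}{\|\alpha_1 x\|}\Bigr) \ll \mathcal{L}_2 (XYq^{-1} + X + q),
$$
which is the classical lemma in Vaughan's book (Lemma 2.2 of \emph{The Hardy--Littlewood Method}). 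We prove it by splitting $x$ into blocks.

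First we cut the range $1\le x\le X$ into blocks $B_j=\{x : jq < x \le (j+1)q\}$ for $0\le j \le X/q$. On a block with $j\ge1$ we have $XY/x \le XY/(jq)$. The key spacing fact is this. Write $\alpha_1 = a/q + \theta/q^2$ with $|\theta|\le1$. For $x=jq+r$ with $1\le r\le q$,
$$
\alpha_1 x = \text{const}_j + \frac{ar}{q} + \frac{\theta r}{q^2},
$$
and the error term $\theta r/q^2$ lies in $[-1/q,1/q]$. Since $ar$ runs over all residues mod $q$, the points $\alpha_1 x$ for $x\in B_j$ are $1/q$-spaced up to a bounded perturbation. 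Hence each interval of length $1/(2q)$ contains $O(1)$ of them. Consequently, for each block,
$$
\sum_{x\in B_j}\min\Bigl(M,\frac1{\|\alpha_1 x\|}\Bigr) \ll M + \sum_{1\le k\le q} \frac{q}{k} \ll M + q\log q.
$$
Here the $O(1)$ points with $\|\alpha_1 x\|<1/(2q)$ contribute $M$, and the rest contribute harmonically.

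Next we bound the contributions of the blocks. For the blocks with $j\ge1$, taking $M=XY/(jq)$ and summing over $j\le X/q+1$ gives
$$
\frac{XY}{q}\log X + \Bigl(\frac{X}{q}+1\Bigr) q\log q \ll \mathcal{L}_2\Bigl(\frac{XY}{q}+X+q\Bigr).
$$
The block $j=0$, i.e.\ $x\le q$, needs separate care, because there $XY/x$ is not uniformly small. For $1\le x\le q/2$ we have $\|\alpha_1 x\| \ge \|ax/q\| - 1/(2q) \ge \tfrac12\|ax/q\|$, and the residues $ax \bmod q$ are distinct. Hence this portion is $\ll \sum_{k\le q} q/k \ll q\log q$. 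The remaining $x\in(q/2,q]$ satisfy $XY/x\le 2XY/q$, so the spacing argument above bounds them by $\ll XY/q + q\log q$.

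Adding these pieces gives the claimed bound for $T$. The main obstacle is the careful treatment of the small-$x$ block and the spacing estimate with the perturbation $\theta r/q^2$. The rest is routine summation. Alternatively, one may simply cite Vaughan's lemma directly, since the statement is attributed to \cite{RCV}.
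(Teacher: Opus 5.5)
Your argument is correct. The paper gives no proof of this lemma and simply cites Vaughan; your reduction to $\sum_{x\le X}\min\bigl(XY/x,\|\alpha_1 x\|^{-1}\bigr)$, followed by the blocks-of-length-$q$ spacing argument (treating $x\le q/2$ separately via $\|\alpha_1 x\|\ge \tfrac12\|ax/q\|$), is the standard proof behind that citation. The only cosmetic fix is to write $q\log 2q$ instead of $q\log q$, so that the case $q=1$ is covered.
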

We now move on to the type II estimate of Vaughan.
\begin{lem}\label{Vaughan2}{\rm [Vaughan, \cite{RCV}]}
Let  $X$, $Y \ge 1$ and $\alpha_1$ be real numbers. Further suppose that $a$, $q$ are integers
satisfying $(a,q)=1$, $q>0$ and $|\alpha_1 - a/q| \le 1/q^2$.
Set $\mathcal{L}_2 = \log 2XYq$. Now consider
$$
S = \sum_{x \le X} \max_{Z \le Y} \bigg| \sum_{y \le Z} a_xb_y e(\alpha_1 xy) \bigg|,
$$
where the $a_x$ and $b_y$ are complex numbers. Then
$$
S \ll \mathcal{L}_2^{\frac32} \left( \sum_{x \le X} |a_x|^2 \sum_{y \le Y} |b_y|^2 \right)^{\frac12} (XYq^{-1} +X+Y+q)^{\frac12}.
$$
\end{lem}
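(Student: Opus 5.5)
The plan is the classical route for type II sums: first remove the inner maximum over $Z$ at the cost of one factor of $\mathcal{L}_2$. Then apply Cauchy--Schwarz in $x$, and finally control the resulting diagonal-type sum by the standard estimate for $\sum_h \min(X, \|\alpha_1 h\|^{-1})$ under the hypothesis $|\alpha_1 - a/q|\le q^{-2}$. That last step supplies $\mathcal{L}_2^{1/2}$ after taking square roots, which accounts for the exponent $3/2$.

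\emph{Step 1 (removing the maximum).} For an integer $1\le y\le Y$ and real $Z\le Y$, I write the indicator $\mathbf{1}_{y\le Z}$ as an integral $\int_{-1/2}^{1/2} K_Z(t)\, e(yt)\,dt$. Here $K_Z(t)=\sum_{m\le Z} e(-mt)$ is a Dirichlet-type kernel, so the identity is just orthogonality on the integers $1\le y\le Y$. It satisfies the uniform bound $|K_Z(t)|\le \kappa(t):=\min(Y, \|t\|^{-1})$, with $\int_{-1/2}^{1/2}\kappa(t)\,dt \ll \log 2Y \le \mathcal{L}_2$. Inserting this and taking absolute values gives
\[
S \le \int_{-1/2}^{1/2} \kappa(t) \sum_{x\le X} |a_x| \Bigl|\sum_{y\le Y} b_y e\bigl(y(\alpha_1 x + t)\bigr)\Bigr|\,dt .
\]
The point is that $\kappa$ does not depend on $Z$ (hence not on $x$), so the maximum has disappeared. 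It therefore suffices to bound the integrand uniformly in $t$ by $(\sum|a_x|^2\sum|b_y|^2)^{1/2}(XYq^{-1}+X+Y+q)^{1/2}\mathcal{L}_2^{1/2}$.

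\emph{Step 2 (Cauchy--Schwarz).} For fixed $t$, Cauchy--Schwarz in $x$ bounds the inner sum by $(\sum_x|a_x|^2)^{1/2}\, W^{1/2}$, where $W=\sum_{x\le X}|\sum_{y\le Y} b_y e(y(\alpha_1x+t))|^2$. Expanding the square and interchanging summation gives
\[
W \le \sum_{y,y'\le Y} |b_y b_{y'}| \Bigl|\sum_{x\le X} e\bigl(\alpha_1 x (y-y')\bigr)\Bigr| .
\]
The phase $e(t(y-y'))$ has unit modulus and drops out. Using $|b_yb_{y'}|\le \tfrac12(|b_y|^2+|b_{y'}|^2)$ and summing the geometric series in $x$, I get
\[
W \ll \sum_{y\le Y}|b_y|^2 \sum_{|h|< Y} \min\bigl(X, \|\alpha_1 h\|^{-1}\bigr).
\]

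\emph{Step 3 (the counting lemma).} Finally I invoke the standard lemma (Vinogradov/Vaughan): if $|\alpha_1-a/q|\le q^{-2}$ with $(a,q)=1$, then
\[
\sum_{h\le H}\min\bigl(X,\|\alpha_1 h\|^{-1}\bigr)\ll \Bigl(\frac{H}{q}+1\Bigr)(X+q\log q).
\]
It is proved by splitting $h$ into blocks of length $q$, on each of which the points $\alpha_1 h$ are spaced about $1/q$ apart modulo $1$. With $H=Y$ (the term $h=0$ contributes $X$), this is $\ll (XYq^{-1}+X+Y+q)\mathcal{L}_2$. Taking square roots and integrating against $\kappa$ gives the claimed bound.

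The only genuinely delicate point is Step 1: one must make sure the kernel bound is uniform in $Z$ and that its $L^1$ norm costs only $\log 2Y$, not a power. Everything after that is routine.
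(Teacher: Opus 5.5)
Your proof is correct. There is nothing in the paper to match it against: the paper does not prove this lemma but quotes it from Vaughan's 1977 note.

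Your argument is the standard one for bilinear bounds of this shape, and it accounts for the exponent $\frac32$ exactly as it should.
\begin{itemize}
\item Completing the range $y\le Z$ with the kernel $\sum_{1\le m\le Z}e(-mt)$ is an exact identity for integer $y\ge 1$. The kernel is dominated, uniformly in $Z\le Y$, by $\min(Y,\|t\|^{-1})$, so removing the maximum costs a factor $\log 2Y\le\mathcal{L}_2$.
\item Cauchy--Schwarz in $x$, followed by the block estimate $\sum_{|h|<Y}\min(X,\|\alpha_1h\|^{-1})\ll(XY/q+X+Y+q)\log 2q$, supplies the remaining $\mathcal{L}_2^{1/2}$.
\end{itemize}
Writing $\log 2q$ rather than $\log q$ in the counting lemma is only cosmetic; it avoids the degenerate case $q=1$.

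Compare this with the device the authors use in the proof of Theorem~\ref{expsumbound} to separate $mk\le N$, namely the truncated Perron integral of Lemma~\ref{perron}. That method also loses one logarithm. However, it is only approximate: it leaves an error of size $N/L$ times an auxiliary bilinear sum, which they must bound separately. They need it because there the cutoff falls on a divisor $m$ of the combined variable $w=jm$ and is coupled to the outer variable $k$. In your setting the cutoff is a sharp condition on the single inner variable $y$, so Fourier completion on the integers handles it with no remainder.
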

\noindent
Two other tools that will be used in proving the main theorems are the following.
The first is a version of Perron's formula.
\begin{lem}\label{perron}{\rm [Page 31 of \cite{GH}]}
For $\rho, \gamma > 0$ and $L\ge1$ real numbers with $\gamma \neq \rho$, we have
\begin{eqnarray*}
\frac{1}{\pi} \int_{-L}^L e^{i\gamma t} \frac{ \sin \rho t}{t} dt = \delta  + \rO(L^{-1} |\rho-\gamma|^{-1} ),
~~~\text{ where }~~~
\delta = \begin{cases}
 0 ~\text{ if } \gamma > \rho,\\ 
 1 ~\text{ if } \gamma < \rho. 
\end{cases}
\end{eqnarray*}
\end{lem}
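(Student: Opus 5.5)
The plan is to reduce the integral to the classical Dirichlet integral $\int_0^\infty \frac{\sin ct}{t}\,dt = \frac{\pi}{2}\operatorname{sgn}(c)$ and to control the truncation error by one integration by parts. First, write $e^{i\gamma t} = \cos \gamma t + i \sin \gamma t$. The function $t \mapsto \sin(\gamma t)\sin(\rho t)/t$ is odd, so its integral over $[-L,L]$ vanishes. The function $t\mapsto \cos(\gamma t)\sin(\rho t)/t$ is even. Using $\cos\gamma t \sin \rho t = \tfrac12\bigl(\sin((\rho+\gamma)t) + \sin((\rho-\gamma)t)\bigr)$, we obtain
$$
\frac{1}{\pi}\int_{-L}^{L} e^{i\gamma t}\frac{\sin\rho t}{t}\,dt = \frac{1}{\pi}\int_0^L \frac{\sin((\rho+\gamma)t)}{t}\,dt + \frac{1}{\pi}\int_0^L \frac{\sin((\rho-\gamma)t)}{t}\,dt .
$$

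Next, for a real $c \neq 0$, I would write $\int_0^L \frac{\sin ct}{t}\,dt = \frac{\pi}{2}\operatorname{sgn}(c) - \int_L^\infty \frac{\sin ct}{t}\,dt$ and bound the tail by integrating by parts:
$$
\int_L^\infty \frac{\sin ct}{t}\,dt = \frac{\cos cL}{cL} - \int_L^\infty \frac{\cos ct}{c t^2}\,dt ,
$$
which has absolute value at most $2/(|c|L)$. This is the only genuinely analytic step, and I expect it to be the main point requiring care: the bound must be uniform in $c$, with no dependence beyond the factor $|c|^{-1}$. The estimate above achieves this.

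Finally, apply this with $c=\rho+\gamma>0$ and with $c=\rho-\gamma\neq 0$. The main terms sum to $\tfrac12\bigl(1+\operatorname{sgn}(\rho-\gamma)\bigr)$, which equals $1$ if $\gamma<\rho$ and $0$ if $\gamma>\rho$; this is exactly $\delta$. The error terms are $\rO\bigl(L^{-1}(\rho+\gamma)^{-1}\bigr) + \rO\bigl(L^{-1}|\rho-\gamma|^{-1}\bigr)$. Since $\rho,\gamma>0$ we have $\rho+\gamma > |\rho-\gamma|$, so the first error is dominated by the second. This gives the claimed $\rO(L^{-1}|\rho-\gamma|^{-1})$. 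The hypothesis $L\ge 1$ is not needed in this argument.
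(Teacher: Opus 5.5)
Your proof is correct and complete. The odd/even split, the identity $\cos\gamma t\sin\rho t=\tfrac12\bigl(\sin(\rho+\gamma)t+\sin(\rho-\gamma)t\bigr)$, the Dirichlet integral and the integration-by-parts tail bound $2/(|c|L)$ all check out, and $\rho+\gamma>|\rho-\gamma|$ absorbs the first error term. This gives the explicit constant $4/\pi$, and, as you note, $L\ge 1$ is not needed.

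The paper gives no proof of its own; it quotes the lemma from page 31 of \cite{GH}. Your argument is the standard derivation of this truncated Perron-type estimate.
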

The second tool we will use is the following expansion of a certain indicator function.
\begin{lem}\label{Fexp}{\rm [see \cite{GH}]}
Let $\Delta \in (0, 1/2)$, $L \in \mathbb{N}$ and 
$f_{\Delta}(\theta)$ be a $1$-periodic function  on $\R$ such that $f_{\Delta}(\theta)$ is 
$ 1 \text{ if } \| \theta \| < \Delta$ and
$ 0$ otherwise.
Then there are coefficients $c_{\ell}^{+}$
such that
$$
 f_{\Delta}(\theta) - 2\Delta \le  \sum_{0 < |\ell| \le L} c_{\ell}^{+}e(\ell \theta) + \frac{1}{L+1}
\text{ with }
|c_{\ell}^{+}| \ll \min \left(\Delta + \frac{1}{L}, \frac{1}{|\ell|} \right). 
$$
\end{lem}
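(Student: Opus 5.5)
The plan is to deduce the inequality from an explicit trigonometric majorant of $f_\Delta$ of degree $L$ whose mean value exceeds $2\Delta$ by exactly $1/(L+1)$. For this I will use the Beurling--Selberg construction in the form given by Vaaler (as in Montgomery's \emph{Ten Lectures}, Ch.~1). Write $\chi(\theta)=f_\Delta(\theta)$, the indicator of the arc $I=(-\Delta,\Delta)$ modulo $1$, and write $\hat\chi(\ell)=\frac{\sin 2\pi\ell\Delta}{\pi \ell}$ for $\ell\neq 0$, with $\hat\chi(0)=2\Delta$. Selberg's theorem states that for any arc $I$ of length $|I|$ and any $L\in\mathbb{N}$ there is a real trigonometric polynomial
$$S^+_L(\theta)=\sum_{|\ell|\le L}\hat S^+_L(\ell)e(\ell\theta)$$
with $S^+_L(\theta)\ge \chi(\theta)$ for all $\theta$, $\hat S^+_L(0)=|I|+\frac{1}{L+1}$, and
$$|\hat S^+_L(\ell)-\hat\chi(\ell)|\le \frac{1}{L+1}\qquad(0<|\ell|\le L).$$

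Granting this, I set $c_\ell^+=\hat S^+_L(\ell)$ for $0<|\ell|\le L$. Then $f_\Delta(\theta)-2\Delta\le S^+_L(\theta)-2\Delta$, and the right-hand side equals $\sum_{0<|\ell|\le L}c_\ell^+e(\ell\theta)+\frac1{L+1}$. For the size of the coefficients I combine the two bounds on $|\hat\chi(\ell)|$: the trivial bound $|\sin x|\le |x|$ gives $|\hat\chi(\ell)|\le 2\Delta$, and $|\sin x|\le 1$ gives $|\hat\chi(\ell)|\le 1/(\pi|\ell|)$. Together with $|\ell|\le L$, so that $\frac1{L+1}\le \frac1{|\ell|}$, this yields
$$|c_\ell^+|\le \min\Big(2\Delta,\frac{1}{\pi|\ell|}\Big)+\frac{1}{L+1}\ll \min\Big(\Delta+\frac1L,\frac1{|\ell|}\Big).$$

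The substantive step is the existence of $S^+_L$. Following Vaaler, let $\psi(\theta)=\{\theta\}-\tfrac12$ be the sawtooth function, so that, with $|I|=2\Delta$,
$$\chi(\theta)=2\Delta+\psi(\theta-\Delta)-\psi(\theta+\Delta)$$
at least away from the endpoints. Vaaler's polynomial $\psi^*_L$ of degree $L$ satisfies
$$|\psi(\theta)-\psi^*_L(\theta)|\le \frac{1}{2(L+1)}\sum_{|\ell|\le L}\Big(1-\frac{|\ell|}{L+1}\Big)e(\ell\theta)=\frac{1}{2(L+1)}\,\Delta_{L+1}(\theta),$$
where $\Delta_{L+1}$ is the Fejér kernel. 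Replacing each $\psi$ by $\psi^*_L$ and adding the two Fejér error terms then gives a majorant. Its constant term is $2\Delta+\frac{1}{L+1}$, and its coefficients differ from $\hat\chi(\ell)$ by at most $\frac{1}{L+1}$, since the Fejér coefficients are bounded by $1$ and the two shifts have modulus one.

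The main obstacle is proving Vaaler's bound for $\psi^*_L$, which needs Beurling's extremal function for $\operatorname{sgn}x$ and a periodization argument. I would not reprove it; I would cite it, as the paper itself cites \cite{GH}.

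If a self-contained argument were preferred, I would instead use Vinogradov's construction: convolve the indicator of a slightly enlarged arc with a normalized box of width $\asymp 1/L$ several times. This majorizes $f_\Delta$ and gives coefficients of size $\min(\Delta+1/L,1/|\ell|)$ times factors decaying like $(L/|\ell|)^{k}$. The cost is that the mean value becomes $2\Delta+O(1/L)$ rather than exactly $2\Delta+\frac1{L+1}$, and truncating at $|\ell|\le L$ is only harmless after enlarging the degree by a constant factor. For that reason the Selberg--Vaaler route is the one I would actually take.
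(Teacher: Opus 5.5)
Your proof is correct and follows the standard route. The paper gives no argument of its own, only the citation to \cite{GH}, and the cited lemma is the usual consequence of Selberg's majorant (built from Vaaler's polynomial): you take $c_\ell^+=\hat{S}^+_L(\ell)$ and obtain $|c_\ell^+|\le \min(2\Delta,1/(\pi|\ell|))+1/(L+1)$, exactly as you derive it.

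One small slip in your sketch concerns $\psi^*_L$: its Fourier coefficients are not those of $\psi$ but the damped ones $\phi(\ell/(L+1))\,\hat{\psi}(\ell)$ with $0\le\phi\le 1$. So the remark about the Fej\'er coefficients does not by itself give $|\hat{S}^+_L(\ell)-\hat{\chi}(\ell)|\le 1/(L+1)$. It does give $\hat{S}^+_L(\ell)=\phi(\ell/(L+1))\,\hat{\chi}(\ell)$ plus a Fej\'er contribution of modulus at most $1/(L+1)$, hence $|\hat{S}^+_L(\ell)|\le|\hat{\chi}(\ell)|+1/(L+1)$, and that is all your coefficient bound uses.
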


With these results, we conclude this section and proceed to the proofs of our theorems.
\section{Unconditional results on approximation by primes in arithmetic progression}\label{proofofmainthm}
Throughout this section $\alpha \in \R\setminus \Q$ is fixed and $u,v$ are fixed positive integers
satisfying $u < v$ and $(u,v)=1$. Further, for any real number $x$, we use $\floor{x}$ to denote the
greatest integer less than or equal to $x$.
We begin by proving \thmref{expsumbound}.

\begin{proof}[Proof of \thmref{expsumbound}]
We first suppose $H > q$, by \lemref{Vaughan2} we get
\begin{eqnarray*}
D_1 =  \sum_{h=1}^H  \bigg|\sum_{\substack{n=1 \\ n \equiv u \bmod v}}^N \Lambda(n) e(\alpha hn) \bigg| 
& = &  \sum_{h=1}^H  \bigg|\sum_{k=0}^{(N-u)/v} \Lambda(u+kv) e(\alpha hkv) \bigg| \\
& \ll & (\log 2HNq)^{3/2} (HN\log^2 N)^{1/2} (HN/q + H + N + q)^{1/2} \\
& \ll & (\log 2HNq)^{5/2} (HN/q^{1/2} + HN^{1/2} + NH^{1/2} + (HNq)^{1/2})\\
& \ll & (\log 2HN)^{5/2} (HN/q^{1/2} + HN^{1/2} + NH^{1/2} + (HNq)^{1/2}),
\end{eqnarray*}
where the last inequality follows from the fact that $H>q$.
Further 
$$
NH^{1/2} \le H^{1/2}N \cdot H^{1/2}/q^{1/2} \ll HN/q^{1/2}.
$$
Hence the result follows if $H>q$. 
If $q>N$ and $q\ge H$  then again 
we apply \lemref{Vaughan2} to get
\begin{eqnarray*}
D_1& \ll & (\log 2HNq)^{5/2} (HN)^{1/2} ((HN/q)^{1/2} + H^{1/2} + N^{1/2} + q^{1/2}) \\
& \ll & (\log 2HNq)^{5/2} (HN/q^{1/2} + HN^{1/2} + (HNq)^{1/2}).
\end{eqnarray*}
Hence the result follows in this case as well. So we assume henceforth that  $H \le q <  N$.
By dyadic division, it suffices to show that if $J \le J' \le 2J$, $J' \le H$ and $ H \le q <  N$,
then
\begin{multline}\label{claim}
\sum_{J \le j \le J'} \bigg|\sum_{\substack{n=1\\ n \equiv u \bmod v}}^N \Lambda(n) e(\alpha jn) \bigg| \ll  \mathcal{L}_1^{7} \left( \frac{JN}{q^{1/2}} + JN^{3/4} + J^{1/2} N^{5/6}+ (JNq)^{1/2} + (JN)^{\varepsilon}q \right).
\end{multline}

We now recall Vaughan's identity \cite{GH} for the $\Lambda$ function.
It states that for real numbers $U$, $V$ both greater than $0$,
$$
\Lambda(n) = a_1(n) + a_2(n) + a_3(n) + a_4(n)
$$
where
\begin{eqnarray*}
a_1(n) = \begin{cases}
\Lambda(n) \text{ if } n \le U\\
0 \text{ otherwise, }
\end{cases}
&&
a_2(n) = - \sum_{\substack{mdr=n \\ m \le U, d\le V}} \Lambda(m) \mu(d),\\
a_3(n) = \sum_{\substack{hd =n, d\le V}} \mu(d) \log h & \text{ and } & a_4(n) = - \sum_{\substack{mk=n \\ m>U, k>V}} \Lambda(m) \sum_{\substack{d \mid k \\ d \le V}} \mu(d).
\end{eqnarray*}
We set $U = V= N^{1/3}$.
Substituting the above in the left hand side of \eqref{claim}, we get
$$
\sum_{J \le j \le J'} \bigg|\sum_{\substack{n=1\\ n \equiv u \bmod v}}^N \Lambda(n) e(\alpha jn) \bigg| \le S_1 + S_2 + S_3 + S_4
$$
where
\begin{multline*}
S_1 = \sum_{J \le j \le J'} \bigg|\sum_{\substack{n=1\\ n \equiv u \bmod v \\ n \le U}}^N \Lambda(n) e(\alpha jn) \bigg|, 
~~
S_2 =  \sum_{J \le j \le J'} \bigg|\sum_{\substack{n=1\\ n \equiv u \bmod v}}^N\phantom{m}  \sum_{\substack{mdr=n \\ m \le U, d\le V}} \Lambda(m) \mu(d)e(\alpha jn) \bigg| \\
S_3 =  \sum_{J \le j \le J'} \bigg|\sum_{\substack{n=1\\ n \equiv u \bmod v}}^N\sum_{\substack{hd =n, d\le V}} \mu(d) \log h~ e(\alpha jn) \bigg|~~ \text{ and }  \\
S_4 =  \sum_{J \le j \le J'} \bigg|\sum_{\substack{n=1\\ n \equiv u \bmod v}}^N \phantom{m} \sum_{\substack{mk=n \\ m>U, k>V}} \Lambda(m) \sum_{\substack{d \mid k \\ d \le V}} \mu(d)
e(\alpha jn) \bigg|.
\end{multline*}
We estimate $S_1$ trivially, to get
$
S_1 \ll JU.
$
Since $U = N^{1/3}$, we have the required bound.
To estimate $S_2$, we first recall that
\begin{multline*}
S_2  
=  \sum_{J \le j \le J'} \bigg| \sum_{\substack{m \le U, \\ d\le V \\(md,v)=1}}\sum_{\substack{r=1\\ r \equiv \overline{md}u \bmod v }}^{N/(md)}  \Lambda(m) \mu(d)e(\alpha jmdr) \bigg| 
 \le  \sum_{J \le j \le J'}  \sum_{\substack{m \le U \\ d\le V\\(md,v)=1}}  \Lambda(m) \bigg|\sum_{\substack{r=1\\ r \equiv \overline{md}u \bmod v }}^{N/(md)}  e(\alpha jmdr) \bigg|. \\ 
\end{multline*}
Let $0 < r_0 \le v-1$ be an integer which satisfies $ r_0 \equiv \overline{md}u \bmod v$.
We now set $r =  r_0 + kv$, to get
\begin{eqnarray*}
S_2 
& \le & \sum_{J \le j \le J'}  \sum_{\substack{m \le U \\ d\le V}}  \Lambda(m) \bigg|\sum_{\substack{k=0 }}^{N/(mdv) - r_0/v}  e(\alpha jmdkv) \bigg| 
\le 
 \sum_{J \le j \le J'}  \sum_{\substack{m \le U \\ d\le V}}  \Lambda(m) \max_{y \le N/(mdv)}  \bigg|\sum_{\substack{k=0 }}^{y}  e(\alpha jmd kv) \bigg|.
\end{eqnarray*}
Setting $s = jmd$, we get for any $\varepsilon >0$
$$
S_2 \ll (JUV)^{\varepsilon/2}  \sum_{ J \le s \le J'UV}  \max_{y \le NJ'/(sv)}  \bigg|\sum_{\substack{k=0 }}^{y}  e(\alpha skv) \bigg|.
$$
By Vaughan's type I estimate (\lemref{Vaughan1}), we get
$$
S_2 \ll (JUV)^{\varepsilon}  \left( \frac{NJ}{vq} + JUV + q \right).
$$
Since $UV = N^{2/3}$, for $\varepsilon$ sufficiently small, we have the required bound.
To estimate $S_3$, we note that
\begin{eqnarray*}
S_3 & = &  \sum_{J \le j \le J'} \bigg|\sum_{\substack{n=1\\ n \equiv u \bmod v}}^N\sum_{\substack{hd =n, d\le V}} \mu(d) \log h~ e(\alpha jn) \bigg| 
 =   \sum_{J \le j \le J'} \bigg|\sum_{\substack{hd \le N , d\le V \\ hd \equiv u\bmod v}} \mu(d) \log h~ e(\alpha jhd) \bigg|\\
& = & \sum_{J \le j \le J'} \bigg|\sum_{\substack{d \le V\\ (d,v)=1}} \mu(d) \sum_{\substack{h\le N/d \\ h \equiv \overline{d}u \bmod v}}  \log h~ e(\alpha jhd) \bigg|
 \le  \sum_{J \le j \le J'}  \sum_{\substack{d \le V\\ (d,v)=1}} \bigg| \sum_{\substack{h\le N/d \\ h \equiv \overline{d}u \bmod v}}  \log h~ e(\alpha jhd) \bigg|. 
\end{eqnarray*}
We now follow Vaughan's argument in \cite{RCV}. Note that $\log h = \int_1^h dt /t$.
$$
S_3 \le  \sum_{J \le j \le J'}  \sum_{\substack{d \le V\\ (d,v)=1}} \bigg| \sum_{\substack{h\le N/d \\ h \equiv \overline{d}u \bmod v}}  \int_1^h \frac{dt}{t}~ e(\alpha jhd) \bigg|
=
\sum_{J \le j \le J'}  \sum_{\substack{d \le V\\ (d,v)=1}} \bigg| \int_1^{N/d} \frac{dt}{t} \sum_{  \substack{t \le h\le N/d \\ h \equiv \overline{d}u \bmod v}}  e(\alpha jhd) \bigg|.
$$
Further
$$
S_3 \le \sum_{J \le j \le J'}  \sum_{\substack{d \le V\\ (d,v)=1}} \int_1^{N/d} \frac{dt}{t} \bigg|  \sum_{  \substack{t \le h\le N/d \\ h \equiv \overline{d}u \bmod v}}  e(\alpha jhd) \bigg|
\le 
 \int_1^{N} \frac{dt}{t}  \sum_{J \le j \le J'}  \sum_{\substack{d \le V\\ (d,v)=1}}\bigg|  \sum_{  \substack{t \le h\le N/d \\ h \equiv \overline{d}u \bmod v}}  e(\alpha jhd) \bigg|
$$
Let us choose an integer $0 < d_0 \le v-1$ such that $d_0  \equiv \overline{d}u \bmod v$.
Now setting $ h = d_0 + kv$, we get
$$
S_3 \le  \int_1^{N} \frac{dt}{t}  \sum_{J \le j \le J'}  \sum_{d \le V}\bigg|  \sum_{\frac{t- d_0}{v} \le k \le \frac{N/d-d_0}{v} }  e(\alpha jdkv) \bigg|
\le 
 \int_1^{N} \frac{dt}{t}  \sum_{J \le j \le J'}  \sum_{d \le V} ~\max_{y \le \frac{N}{dv}}~\bigg|  \sum_{\frac{t-d_0}{v} \le k \le y}  e(\alpha jdkv) \bigg|.
$$
We now combine the variables $j$ and $d$ by setting $jd=w$, to get
$$
S_3 \ll (JN)^{\varepsilon/2}  \int_1^{N} \frac{dt}{t}  \sum_{J \le w \le J'V} ~\max_{y \le \frac{NJ'}{wv}}~ \bigg|  \sum_{\frac{t-d_0}{v} \le k \le y }  e(\alpha wkv) \bigg|
$$
for any real number $\varepsilon >0$.
Using Vaughan's type I sum estimate (\lemref{Vaughan1}), we get
$$
S_3 \ll  (JN)^{\varepsilon} \left(\frac{JN}{v q} + JV +q \right).
$$
Since $V = N^{1/3}$, for $\varepsilon$ sufficiently small, we have the required bound.
We will now estimate the term $S_4$.
Recall that
$$
S_4 = \sum_{J \le j \le J'} \bigg| \sum_{\substack{n=1\\ n \equiv u \bmod v}}^N~ \sum_{\substack{mk = n \\ m>U, k>V}} \Lambda(m) \sum_{\substack{d \mid k \\ d \le V}} \mu(d) e(\alpha j n) \bigg|.
$$
We break the interval $[V,N/U]$ into smaller intervals of the the form $[2^s V, 2^{s+1}V] \cap  [V,N/U]$. 
Here $s$ is an integer greater than or equal to $0$. 
We define for real numbers $K,K' >0$ the sum
$$
S_{4}(K,K') := \sum_{J \le j \le J'} \bigg|  \sum_{\substack{ K < k \le K'\\ U < m \le N/k\\ mk \equiv u \bmod v}} \Lambda(m) \sum_{\substack{d \mid k \\ d \le V}} \mu(d) e(\alpha j mk) \bigg|.
$$
We observe that
\begin{eqnarray*}
|S_4| 
& \le & \sum_{0 < s < \log_2 (N/(UV))} \sum_{J \le j \le J'} \bigg|  \sum_{\substack{ 2^{s}V<k\le \min(2^{s+1}V, N/U) \\ U < m \le N/k\\ mk \equiv u \bmod v}} \Lambda(m) \sum_{\substack{d \mid k \\ d \le V}} \mu(d) e(\alpha j mk) \bigg|\\
& \le & \sum_{\substack{0 < s < \log_2 (N/(UV))}} |S_{4}(2^sV,\min(2^{s+1}V, N/U))|.
\end{eqnarray*}
There are $\rO(\log N)$ terms in the above sum.
We note that it suffices to show that 
$$
S_{4}(K,K') \ll
\mathcal{L}_1^{6} \left(\frac{JN}{q^{1/2}} + JN^{3/4} + J^{1/2} N^{5/6}+(JNq)^{1/2} \right)
$$
for $K=2^s V, K'= \min(2^{s+1}V, N/U)$ for all $0 < s < \log_2 N^{1/3}$. 
We will now estimate $S_{4}(K,K')$ for $K=2^s V, K'= \min(2^{s+1}V, N/U)$ for all $0 < s < \log_2 N^{1/3}$.
This will be done based on whether 
$V < K \ll N^{1/2} ~~\text{ or }~~ N^{1/2} \ll K < N/U.$
We set $a_m = \Lambda(m)$ and $b_k = \sum_{\substack{d \mid k \\ d \le V}} \mu(d)$.
We can now write
\begin{eqnarray*}
S_{4}(K,K') & = & \sum_{x_1 \bmod v}\sum_{\substack{J \le j \le J' \\ j \equiv x_1 \bmod v}} \bigg| \sum_{\substack{x_2 \bmod v \\ (x_2,v)=1}} ~ \sum_{\substack{ K < k \le K'\\ U < m \le N/k \\m \equiv x_2 \bmod v \\ k \equiv u\overline{x_2} \bmod v}}  a_m b_k e(\alpha j mk) \bigg|  \\
& \le & \sum_{x_1 \bmod v} \sum_{\substack{x_2 \bmod v \\ (x_2,v)=1}} \sum_{\substack{J \le j \le J' \\ j \equiv x_1 \bmod v}} \bigg|  \sum_{\substack{K < k \le K'\\ U < m \le N/k \\m \equiv x_2 \bmod v \\ k \equiv u\overline{x_2} \bmod v}} a_m b_k e(\alpha j mk) \bigg|  \\
& \le& v^2 \max_{(x_1,x_2,x_3) \bmod v} \sum_{\substack{J \le j \le J' \\ j \equiv x_1 \bmod v}} \bigg|  \sum_{\substack{ K < k \le K' \\ U< m \le N/k  \\ m \equiv x_2 \bmod v \\k \equiv  x_3 \bmod v}} a_m b_k e(\alpha j mk) \bigg|.
\end{eqnarray*}
For some fixed choice of classes $(x_1,x_2,x_3) \bmod v$ with $0 \le x_i \le v-1$ for $i \in \{1,2,3\}$ set
\begin{eqnarray*}
S_{4}(K, K', x_1,x_2,x_3) &:= &\sum_{\substack{J \le j \le J' \\ j \equiv x_1 \bmod v}} \bigg|  \sum_{\substack{ K< k \le K' \\ U < m \le N/k \\ m \equiv x_2 \bmod v\\ k \equiv  x_3 \bmod v}} a_m b_k e(\alpha j mk) \bigg|. \\
\end{eqnarray*}
Therefore, we have
$$
S_{4}(K, K', x_1,x_2,x_3) 
 = 
\sum_{\substack{J \le j \le J' \\ j \equiv x_1 \bmod v}} \bigg|  \sum_{\substack{ K < k \le K'\\ k \equiv x_3 \bmod v }} b_k \sum_{\substack{U < m \le N/k \\ m \equiv  x_2 \bmod v}} a_m e(\alpha j mk) \bigg|. \\
$$
We may write
$$
S_{4}( K, K', x_1,x_2,x_3):= \sum_{\substack{J \le j \le J' \\ j \equiv x_1 \bmod v}} c_j   \sum_{\substack{ K < k \le K'\\ k \equiv x_3 \bmod v }} b_k \sum_{\substack{U < m \le N/k \\ m \equiv  x_2 \bmod v}} a_m e(\alpha j mk)
$$
for some complex constants $c_j$, $J \le j \le J'$ with $|c_j|=1$.
Suppose that $V < K \ll N^{1/2}$.
We now set $w = jk$, to get
$$
S_{4}( K, K',  x_1,x_2,x_3):= \sum_{\substack{JK \le w \le J'K' \\ w \equiv x_1x_3 \bmod v}}   \sum_{\substack{ k \mid w\\ K < k \le K' \\ J \le w/k \le J'\\ k \equiv x_3 \bmod v \\ w/k \equiv x_1 \bmod v }} \left( c_{w/k}b_k \sum_{\substack{ U < m \le N/k \\m \equiv  x_2 \bmod v}} a_m e(\alpha wm) \right).
$$
We shall use $\tau_d(n)$ to denote the number of ordered ways of writing $n$ as a product of $d$ positive integers.
Now
\begin{eqnarray*}
|S_{4}( K, K',  x_1,x_2,x_3)| &\le & \sum_{\substack{JK \le w \le J'K' \\ w \equiv x_1x_3 \bmod v}}   \sum_{\substack{ k \mid w\\ K < k \le K' \\ J \le w/k \le J'\\ k \equiv x_3 \bmod v \\ w/k \equiv x_1 \bmod v }} \left( \tau_2(k) \left| \sum_{\substack{ U < m \le N/k \\m \equiv  x_2 \bmod v}} a_m e(\alpha wm) \right| \right) \\
& \le & 
  \sum_{\substack{JK \le w \le J'K' \\ w \equiv x_1x_3 \bmod v}}  \tau_3(w) \max_{y \le N/K} \left| \sum_{\substack{ U < m \le y \\m \equiv  x_2 \bmod v}} a_m e(\alpha wm) \right|  \\
& \le &
\sum_{\substack{JK \le w \le J'K' \\ w \equiv x_1x_3 \bmod v}}  \tau_3(w) \max_{y \le \frac{N/K - x_2}{v}} \left| \sum_{\substack{ \frac{U-x_2}{v} < m_1 \le y }} a_{x_2 +m_1v} e(\alpha wm_1v) \right|.  \\
\end{eqnarray*}
We have $$\sum_{m_1 \le \frac{N/K-x_2}{v}} | a_{x_2 +m_1v}|^2 \le \sum_{m \le N/K } \Lambda(m)^2 \ll \frac{N \log N}{K}.$$
    Using the fact that $\sum_{k \le X} \tau_3(k)^2 \ll X\log^8 X$, we may apply Vaughan's type II estimate (\lemref{Vaughan2}). 
    If $V < K \ll N^{1/2}$ we get
    \begin{eqnarray*}
  |S_{4}( K, K',  x_1,x_2,x_3)|        \ll
     \mathcal{L}_1^{6} (JK)^{1/2} (N/K)^{1/2} \left(\frac{JN}{q} + JK + \frac{N}{K} + q\right)^{1/2}\\
     \ll
       \mathcal{L}_1^{6}  \left(\frac{JN}{q^{1/2}} + J(KN)^{1/2} + \frac{J^{1/2}N}{K^{1/2}} + (JNq)^{1/2}\right)\\
       \ll
       \mathcal{L}_1^{6}  \left(\frac{JN}{q^{1/2}} + JN^{3/4} + J^{1/2}N^{5/6} + (JNq)^{1/2}\right).
     \end{eqnarray*}

Now we consider the case where $ N^{1/2} \ll K < N/U$.
Recall
\begin{multline*}
S_{4}( K, K', x_1,x_2,x_3):=\sum_{\substack{J \le j \le J' \\ j \equiv x_1 \bmod v}} c_j   \sum_{\substack{ K < k \le K'\\ k \equiv x_3 \bmod v }} b_k \sum_{\substack{U < m \le N/k \\ m \equiv  x_2 \bmod v}} a_m e(\alpha j mk)\\
= \sum_{\substack{ K < k \le K'\\ k \equiv x_3 \bmod v }} b_k \sum_{\substack{J \le j \le J' \\ j \equiv x_1 \bmod v}} c_j   \sum_{\substack{ U < m \le N/k \\m \equiv  x_2 \bmod v}} a_m e(\alpha j mk).
\end{multline*}
We set $w = jm$ to get
$$
S_{4}( K,K', x_1,x_2,x_3) = \sum_{\substack{ K < k \le K'\\ k \equiv x_3 \bmod v }} b_k \sum_{\substack{JU \le w \le J'N/k \\ w \equiv x_1x_2 \bmod v}}   \sum_{\substack{m \mid w \\ U < m \le N/k \\ J \le w/m \le J' \\ m \equiv x_2 \bmod v\\ w/m \equiv x_1\bmod v}} a_m c_{w/m} e(\alpha wk).
$$
For a real number $t$, let us set 
$$
T_{1,t} = \sum_{\substack{ K < k \le K'\\ k \equiv x_3 \bmod v }} b_k k^{it} \sum_{\substack{JU \le w \le J'N/k \\ w \equiv x_1x_2 \bmod v}}   \sum_{\substack{m \mid w \\ U < m \le N/K \\ J \le w/m \le J' \\ m \equiv x_2 \bmod v\\ w/m \equiv x_1\bmod v}} a_m m^{it} c_{w/m} e(\alpha wk)
$$
and 
$$
T_{2} = \sum_{\substack{ K < k \le K'\\ k \equiv x_3 \bmod v }} b_k  \sum_{\substack{JU \le w \le J'N/k \\ w \equiv x_1x_2 \bmod v}}   \sum_{\substack{m \mid w \\ U < m \le N/K \\ J \le w/m \le J' \\ m \equiv x_2 \bmod v\\ w/m \equiv x_1\bmod v}} a_m c_{w/m} e(\alpha wk).
$$

Setting $\gamma = \log (mk)$ and $\rho = \log (N+\frac12)$ in \lemref{perron}, we get that 
\begin{multline*}
\pi S_{4}( K, K', x_1,x_2,x_3) =\\
 \sum_{\substack{ K < k \le K'\\ k \equiv x_3 \bmod v }} b_k \sum_{\substack{JU \le w \le J'N/k \\ w \equiv x_1x_2 \bmod v}}   \sum_{\substack{m \mid w\\  U < m \le N/K \\ J \le w/m \le J'\\ m \equiv x_2 \bmod v\\ w/m \equiv x_1\bmod v}} a_m c_{w/m} e(\alpha wk) \bigg( \int_{-L}^L e^{i t\log(mk) } \frac{ \sin( \log \left(N+\frac12 \right) t)}{t} dt  \\
 ~+ ~  \rO\left(L^{-1} |\rho -\gamma|^{-1}  \right) \bigg).
\end{multline*}
Note that if $mk \le N$ we have $\left|\log\frac{(N+\frac12)}{mk} \right| \ge \log(1 + \frac{1}{2N}) \gg \frac{1}{N}$.
Further, if $mk \ge N+1$, we get $\left|\log\frac{(N+\frac12)}{mk} \right| = \log\frac{mk}{(N+\frac12)} \ge \log(1 + \frac{1}{2N+1}) \gg \frac{1}{N}$.
In either case, we have  $\left|\log\frac{(N+\frac12)}{mk} \right| \gg \frac{1}{N}$. 
So the total error term above is
$
\ll \frac{N}{L} \left| T_2 \right|.
$
We note that
$$
 \int_{-L}^L e^{i t\log(mk) } \frac{ \sin (\log(N+\frac12) t)}{t} dt 
 =
\left( \int_{[-L,-\frac{1}{L}]\cup [\frac{1}{L}, L]} + \int_{[-\frac{1}{L},  \frac{1}{L}]} \right) e^{i t \log(mk) } \frac{ \sin (\log(N+\frac12) t)}{t} dt.
$$
For
$$
I_1=\sum_{\substack{ K < k \le K'\\ k \equiv x_3 \bmod v }} b_k \sum_{\substack{JU \le w \le J'N/k \\ w \equiv x_1x_2 \bmod v}}   \sum_{\substack{m \mid w \\ U < m \le N/K \\ J \le w/m \le J' \\ m \equiv x_2 \bmod v\\ w/m \equiv x_1\bmod v}} a_m c_{w/m} e(\alpha wk) \int_{[-L,-\frac{1}{L}] \cup [\frac{1}{L}, L]} e^{i t\log(mk) } \frac{ \sin (\log(N+\frac12) t)}{t} dt 
$$
we use the bound $|\sin x| \le 1$ to get
\begin{eqnarray*}
|I_1| \le  \max_{|t| \le L} \left|T_{1,t}\right|  \int_{[-L,-\frac{1}{L}] \cup [\frac{1}{L}, L]} \frac{1}{|t|} dt 
~~\ll~~ \log L \max_{|t| \le L} \left|T_{1,t}\right|.
\end{eqnarray*}
For
$$
I_2=\sum_{\substack{ K < k \le K'\\ k \equiv x_3 \bmod v }} b_k \sum_{\substack{JU \le w \le J'N/k \\ w \equiv x_1x_2 \bmod v}}   \sum_{\substack{m \mid w\\ U < m \le N/K \\ J \le w/m \le J' \\ m \equiv x_2 \bmod v\\ w/m \equiv x_1\bmod v}} a_m c_{w/m} e(\alpha wk) \int_{[-\frac{1}{L}, \frac{1}{L}]} e^{it\log(mk) } \frac{ \sin (\log(N+\frac12) t)}{t} dt 
$$
we use the bound $|\sin x| \le |x|$ to get
\begin{eqnarray*}
|I_2| \le  \max_{|t| \le L} \left|T_{1,t}\right|  \int_{[-\frac{1}{L}, \frac{1}{L}]} \log\left(N+\frac12 \right) dt 
\ll \frac{ \log \left(N+\frac12\right)}{L} \max_{|t| \le L} \left|T_{1,t}\right|.
\end{eqnarray*}
Setting $L = N^2$, we get
$$
S_{4}( M, M', x_1,x_2,x_3) \ll \log N \max_{|t| \le N^2} \left|T_{1,t}\right| 
+ 
\frac{1}{N}  \left| T_2 \right|.
$$
We now estimate $|T_{1,t}|$ and $|T_2|$.
We have
$$
|T_{1,t}| \le \sum_{\substack{ K < k \le K'\\ k \equiv x_3 \bmod v }} |b_k | \max_{y\le J'N/K} \left|\sum_{\substack{JU \le w \le y \\ w \equiv x_1x_2 \bmod v}}  d_w e(\alpha wk) \right|
\text{ where }
d_{w} =  \sum_{\substack{m \mid w \\ U < m \le N/K \\ J \le w/m \le J' \\ m \equiv x_2 \bmod v\\ w/m \equiv x_1\bmod v}} a_m m^{it} c_{w/m}.$$
We have
\begin{eqnarray*}
 \max_{y\le J'N/K} \left|\sum_{\substack{JU \le w \le y \\ w \equiv x_1x_2 \bmod v}}  d_w e(\alpha wk) \right|
 \le  
 \max_{y \le \frac{J'N/K - x_1x_2}{v}} \left| \sum_{\substack{\frac{JU-x_1x_2}{v} \le w_1 \le y}}   d_{x_1x_2 +w_1v} e(\alpha w_1kv) \right|.
\end{eqnarray*}
We note that for any real $X>0$, $\sum_{k \le X} \tau(k)^2 \ll X \log^3 X$ and
$$\sum_{w_1 \le \frac{J'N/K - x_1x_2}{v}} | d_{x_1x_2 +w_1v}|^2 \le \sum_{w \le J'N/K } (\sum_{k \mid w} \Lambda(k))^2 \ll \frac{JN\log^2 N}{K}.$$
We may apply Vaughan's type II estimate (\lemref{Vaughan2}). to get
   \begin{multline*}
|T_{1,t}| \ll  \mathcal{L}_1^{4} (JN/K)^{\frac12} (K)^{\frac12} \left(\frac{JN}{q} + K + \frac{JN}{K} + q\right)^{\frac12} 
     \ll 
     \mathcal{L}_1^{4} \left(\frac{JN}{q^{1/2}} + (JNK)^{1/2} + \frac{JN}{K^{1/2}} + (JNq)^{1/2}\right).
     \end{multline*}
Further
$$
|T_2| \le \sum_{\substack{ K < k \le K'\\ k \equiv x_3 \bmod v }} |b_k | \max_{y\le J'N/K} \left|\sum_{\substack{JU \le w \le y \\ w \equiv x_1x_2 \bmod v}}   d_w e(\alpha wk) \right|
\text{ where }
d_{w} =  \sum_{\substack{m \mid w \\ U < m \le N/K \\ J \le w/m \le J' \\ m \equiv x_2 \bmod v\\ w/m \equiv x_1\bmod v}} a_m  c_{w/m}.$$
We have
\begin{eqnarray*}
 \max_{y\le J'N/K} \left|\sum_{\substack{JU \le w \le y \\ w \equiv x_1x_2 \bmod v}}  d_w e(\alpha wk) \right|
 \le 
 \max_{y \le \frac{J'N/K - x_1x_2}{v}} \left| \sum_{\substack{\frac{JU-x_1x_2}{v} \le w_1 \le y}}   d_{x_1x_2 +w_1v} e(\alpha w_1kv) \right|.
\end{eqnarray*}
We note that $\sum_{k \le X} \tau(k)^2 \ll X\log^3 X$ and
$$\sum_{w_1 \le \frac{J'N/K - x_1x_2}{v}} | d_{x_1x_2 +w_1v}|^2 \le \sum_{w \le J'N/K } \log(w)^2 \ll JN(\log N)^2/K.$$
We may apply Vaughan's type II estimate (\lemref{Vaughan2}). to get
   \begin{multline*}
|T_2| \ll  \mathcal{L}_1^{4} (JN/K)^{\frac12} (K)^{\frac12} \left(\frac{JN}{q} + K + \frac{JN}{K} + q\right)^{\frac12} 
     \ll 
     \mathcal{L}_1^{4} \left(\frac{JN}{q^{1/2}} + (JNK)^{1/2} + \frac{JN}{K^{1/2}} + (JNq)^{1/2}\right).
     \end{multline*}
     Since $N^{1/2} \ll K < N/U$, we have 
     \begin{equation*}
\max_{|t| \le N^2} |T_{1,t}|, |T_2|      \ll  \mathcal{L}_1^{4} \left(\frac{JN}{q^{1/2}} + J^{1/2} N^{5/6} + JN^{3/4} + (JNq)^{1/2}\right).
     \end{equation*}
     Therefore if $N^{1/2} \ll K < N/U$,
      $$
  |S_{4}( K, K',  x_1,x_2,x_3)| \ll     \mathcal{L}_1^{5} \left(\frac{JN}{q^{1/2}} + J^{1/2}N^{5/6} + JN^{3/4} + (JNq)^{1/2}\right).
     $$
     This completes the proof.
       \end{proof}

\begin{proof}[Proof of \thmref{asymp}]
We know that
$$
f_{\Delta}(\theta) - 2\Delta \le \sum_{1 \le |h| \le H} c_h^+ e(h\theta) + \frac{1}{H+1}
\text{ with }
|c_h^+| \ll \min \left( \Delta + \frac{1}{H}, \frac{1}{|h|} \right).
$$
Therefore we have
\begin{multline*}
\sum_{\substack{n=1 \\ n \equiv u \bmod v}}^N \Lambda(n) (f_{\Delta}(\alpha n -\beta) - 2\Delta) \le \sum_{\substack{n=1 \\ n \equiv u \bmod v}}^N \Lambda(n) \sum_{1 \le |h| \le H} c_h^{+} e(h(\alpha n -\beta)) + \frac{1}{H+1}\sum_{\substack{n=1 \\ n \equiv u \bmod v}}^N \Lambda(n).
\end{multline*}
Consider the first term
$$
 \sum_{\substack{n=1 \\ n \equiv u \bmod v}}^N \Lambda(n) \sum_{1 \le |h| \le H} c_h^+ e(h(\alpha n -\beta))  \ll 
 \sum_{|h|=1}^H \min \left( \Delta + \frac{1}{H}, \frac{1}{|h|} \right) \bigg|\sum_{\substack{n=1 \\ n \equiv u \bmod v}}^N \Lambda(n) e(h\alpha n) \bigg|.
$$
We note that for $H = 1 +\floor{Nq/\Delta}$, we have
$$
 \sum_{\substack{n=1 \\ n \equiv u \bmod v}}^N \Lambda(n) \sum_{1 \le |h| \le H} c_h^+ e(h(\alpha n -\beta))  \ll 
 \sum_{|h|=1}^H \min \left( \Delta, \frac{1}{|h|} \right) \bigg|\sum_{\substack{n=1 \\ n \equiv u \bmod v}}^N \Lambda(n) e(h\alpha n) \bigg|.
$$
For the error term, we have
\begin{eqnarray*}
\frac{1}{H+1} \sum_{\substack{n=1 \\ n \equiv u \bmod v}}^N \Lambda(n)   \ll \frac{N}{H} \ll \frac{\Delta}{q}.
\end{eqnarray*}

Now we move on to the exponential sum.
Let
$$
S(m) = \sum_{1 \le h \le m}\bigg| \sum_{\substack{ n=1\\ n \equiv u \bmod v}}^N \Lambda(n) e(\alpha hn)\bigg|. 
$$
We have, by partial summation
\begin{multline*}
 \sum_{h=1}^H \min \left( \Delta, \frac{1}{h} \right) \bigg|\sum_{\substack{n=1 \\ n \equiv u \bmod v}}^N \Lambda(n) e(h\alpha n) \bigg|
  = 
 \frac{S(H)}{H} + \int_{\Delta^{-1}}^H \frac{S(t)}{t^2} dt \\
  =  \mathcal{L}_1^{7} (N/q^{1/2} + N^{3/4} + N^{5/6}/H^{1/2} + (Nq)^{1/2}/H^{1/2} + N^{\varepsilon}q/H^{1-\varepsilon}  \\
 +   \mathcal{L}_1(N/q^{1/2} + N^{3/4}) + N^{5/6}\Delta^{1/2} +  (Nq)^{1/2} \Delta^{1/2}+ N^{\varepsilon}q \Delta^{1-\varepsilon} )\\
  \ll   \mathcal{L}_1^{8}(N/q^{1/2} + N^{3/4} + N^{5/6}\Delta^{1/2} + (\Delta Nq)^{1/2} + N^{\varepsilon}q \Delta^{1-\varepsilon}).
 \end{multline*}
 We note that for the sum
 $$
 \sum_{h=-1}^{-H} \min \left( \Delta, \frac{1}{|h|} \right) \bigg|\sum_{\substack{n=1 \\ n \equiv u \bmod v}}^N \Lambda(n) e(h\alpha n) \bigg|
 =  \sum_{h=1}^{H} \min \left( \Delta, \frac{1}{h} \right) \bigg|\sum_{\substack{n=1 \\ n \equiv u \bmod v}}^N \Lambda(n) e(-h\alpha n) \bigg|,
 $$
we may consider 
$$
S''(m) = \sum_{1 \le h \le m}\bigg| \sum_{\substack{ n=1\\ n \equiv u \bmod v}}^N \Lambda(n) e(-\alpha hn)\bigg|.
$$
Since $|-\alpha v - (-a)/q| < 1/q^2$, the same calculation gives us the required bound.
\end{proof}

\begin{proof} 
[Proof of Corollary \ref{mainthm}]
Set  $\epsilon$ be a real number in $(0,2/5)$, $N = q^2$ and $\Delta = CN^{-1/4} \log^{8} N$ for a sufficiently large constant $C>0$  in \thmref{asymp}.
This gives the first part of the Corollary. 
For the second part of the corollary, 
we have by \thmref{asymp} that
$$
\sum_{\substack{n=1 \\ n \equiv u \bmod v}}^N \Lambda(n) (f_{\Delta(N)}(\alpha n - \beta) - 2\Delta) \ll_v  \mathcal{L}^8  ( N/q^{1/2} + N^{3/4} + N^{5/6}\Delta^{1/2} + (\Delta Nq)^{1/2} + N^{\epsilon}q \Delta^{1-\epsilon}).
$$
We will use $\pi(N,u,v)$ to denote the number of primes $\ell \le N$ satisfying $\ell \equiv u \bmod v$.
For the last part of the corollary, for $v$ fixed and $N \to \infty$, we note by partial summation that
\begin{eqnarray*}
\sum_{\substack{p \le N \text{ prime } \\ p \equiv u \bmod v \\ \|\alpha p -\beta\| < \Delta}}^N 1 - 2\Delta \pi(N,u,v)
& = & 
\sum_{\substack{p \le N  \text{ prime } \\ p \equiv u \bmod v \\ \|\alpha  p -\beta\| < \Delta}}
\frac{\Lambda(p)}{\log p}  -  2\Delta\sum_{\substack{p \le N  \text{ prime } \\ p \equiv u \bmod v}} \frac{\Lambda(p)}{\log p} \\
& =&
\sum_{\substack{n \le N  \\ n \equiv u \bmod v \\ \|\alpha  n -\beta\| < \Delta}} \frac{\Lambda(n)}{\log n}  -  2\Delta\sum_{\substack{n \le N  \\ n \equiv u \bmod v}} \frac{\Lambda(n)}{\log n} +{\rm O}(N^{1/2} \log N) \\
& = &\frac{\sum_{\substack{n=1 \\ n \equiv u \bmod v}}^N \Lambda(n) (f_{\Delta}(\alpha n - \beta)-2\Delta)}{\log N} +{\rm O}(N^{1/2} \log N) \\
&+ &\int_{2}^N \frac{ \sum_{\substack{n=1 \\ n \equiv u \bmod v}}^t \Lambda(n) (f_{\Delta}(\alpha n - \beta)-2\Delta)}{t \log^2 t} dt\\
& = &
 {\rm O}_v\left(\int_{2}^N \frac{ \mathcal{L}^8  (t/q^{1/2} + t^{3/4} + t^{5/6}\Delta^{1/2} + (\Delta tq)^{1/2} +  t^{\epsilon}q \Delta^{1-\epsilon})}{t \log^2 t} dt \right)\\
& + & {\rm O}_v(\mathcal{L}^7  ( N/q^{1/2} + N^{3/4} + N^{5/6}\Delta^{1/2} + (\Delta Nq)^{1/2} + N^{\epsilon}q \Delta^{1-\epsilon})).\\
\end{eqnarray*}
For each of the remaining integrals, we proceed in the same manner as seen below. As $N \to \infty$,
\begin{eqnarray*}
\int_{2}^N \frac{ \mathcal{L}^8   t^{3/4}}{t\log^2 t}dt   \ll N^{3/4} \mathcal{L}^6 \int_2^N \frac{dt}{t} \ll N^{3/4}\mathcal{L}^7 , 
&~&  \int_{2}^N \frac{\mathcal{L}^8 t^{5/6}\Delta^{1/2}}{t \log^2 t} dt \ll N^{5/6} \Delta^{1/2} \mathcal{L}^7 ,\\
\int_{2}^N \frac{\mathcal{L}^8 dt}{ \log^2 t} \ll N\mathcal{L}^6,~ \int_{2}^N \frac{\mathcal{L}^8 (\Delta tq)^{1/2}}{t\log^2 t}dt \ll (Nq \Delta)^{1/2} \mathcal{L}^7  & \text{ and } & \int_{2}^N \frac{\mathcal{L}^8t^{\epsilon}q \Delta^{1-\epsilon}}{t \log^2 t} dt \ll N^{ \epsilon} q\Delta^{1-\epsilon}\mathcal{L}^7.
\end{eqnarray*}
Therefore, for fixed $v$, as  $N \to \infty$,
\begin{multline*}
\{ 1 \le p \le N~ : ~p \text{ prime},~p \equiv u \bmod v,~ \| \alpha p - \beta\| < \Delta(N)\}   = 
2\Delta(N) \pi(N,u,v) \\
 +  
{\rm O}_v(\mathcal{L}^7  ( N/q^{1/2} + N^{3/4} + N^{5/6}\Delta^{1/2} + (\Delta Nq)^{1/2} + N^{\epsilon}q \Delta^{1-\epsilon})).
\end{multline*}

\end{proof}
\section{Examples of irrational numbers  all of whose convergents have denominators prime and congruent to $u \bmod v$}\label{examples}
In this section, we construct an uncountable set $S \subset \R \setminus \Q$
such that for every $\gamma \in S$, all  convergents of $\gamma$ have denominators which are primes and congruent to $u \bmod v$.
In order to construct $S$, we begin with a set $\mathcal{P}$ defined as follows:
\begin{eqnarray*}
\mathcal{P}:=
\left\{ (q_i)_{i=-1}^{\infty} ~:~ \begin{aligned} 
 q_{-1}=0, q_0 = 1, \text{ for all } n \ge 1, q_n \text{ is prime},\\ q_n \equiv u \bmod v ,  q_n > q_{n-1}, q_{n} \equiv q_{n-2} \bmod q_{n-1} 
 \end{aligned} \right\}.
\end{eqnarray*}
The Set $S$ is now given by 
\begin{eqnarray*}
S := \left\{ \gamma = [a_0;a_1,a_2,\ldots] ~:~ \begin{aligned}  a_0 \in \mathbb{N}, \text{ there exists  a }  (q_i)_{i=-1}^{\infty} \in \mathcal{P} \text{ such that} \\ \text{ for all } n \ge 1,  a_n = (q_n - q_{n-2})/q_{n-1}  \end{aligned} \right\}.
\end{eqnarray*}
From the theory of continued fractions (see \cite{YB}), it is known
that for $\gamma
= [a_0;a_1,a_2,\ldots] \in S$,
 $q_n$ is the denominator of the $n$-th
 convergent to $\gamma$.
By a result of Vahlen (see page 8 of \cite{YB}), infinitely many $q_n$ satisfy
$$
\left\| \gamma{q_n} \right\| < \frac{1}{2q_n}.
$$
For any $\gamma \in S$, since the $q_n$'s are prime by construction and satisfy the congruence $u \bmod v$, we have infinitely many primes $\ell$ satisfying
$$
\|\gamma \ell\| < \frac{1}{\ell} ~~~\text{ and } ~~~ \ell \equiv u\bmod v.
$$
\begin{lem}
The set  $S$ is uncountable.
\end{lem}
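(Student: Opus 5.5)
The plan is to embed the set of infinite binary sequences $\{0,1\}^{\mathbb{N}}$ injectively into $S$. I will show that every finite initial segment of a sequence in $\mathcal{P}$ can be extended in at least two (indeed infinitely many) ways. Then I will check that distinct sequences in $\mathcal{P}$ produce distinct elements $\gamma\in S$.

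\emph{Step 1 (extension).} Suppose $q_{-1}=0,q_0=1,q_1,\dots,q_{n-1}$ already satisfy the defining conditions of $\mathcal{P}$ up to index $n-1$. We need a prime $q_n>q_{n-1}$ with
\[
q_n\equiv u \bmod v \quad\text{and}\quad q_n\equiv q_{n-2}\bmod q_{n-1}.
\]
For $n=1$ the second condition is vacuous (the modulus is $q_0=1$), and Dirichlet's theorem gives infinitely many primes $q_1\equiv u\bmod v$, since $(u,v)=1$.

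For $n\ge 2$, note that $q_{n-1}$ is a prime with $q_{n-1}\equiv u\bmod v$. Since $(u,v)=1$, we get $q_{n-1}\nmid v$, so $(q_{n-1},v)=1$. Also $1\le q_{n-2}<q_{n-1}$ and $q_{n-1}$ is prime, so $(q_{n-2},q_{n-1})=1$. By the Chinese remainder theorem the two congruences combine into a single residue class $c \bmod q_{n-1}v$. This class satisfies $(c,q_{n-1}v)=1$ because $(u,v)=1$ and $(q_{n-2},q_{n-1})=1$. Dirichlet's theorem then yields infinitely many primes in this class, and in particular at least two exceeding $q_{n-1}$.

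Fix once and for all a rule picking two such primes $p_n^{(0)}<p_n^{(1)}$ depending on the previous terms. A binary sequence $(\epsilon_n)_{n\ge 1}$ then determines, by induction, a sequence $q_n=p_n^{(\epsilon_n)}$ lying in $\mathcal{P}$. Distinct binary sequences give distinct elements of $\mathcal{P}$: at the first index where they differ, the previous terms agree, so the chosen primes differ.

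\emph{Step 2 (injectivity into $S$).} Fix $a_0\in\mathbb{N}$ and set $a_n=(q_n-q_{n-2})/q_{n-1}$. Since $q_n>q_{n-1}\ge q_{n-2}$ and $q_{n-1}\mid q_n-q_{n-2}$, each $a_n$ is a positive integer, so $[a_0;a_1,a_2,\dots]$ is a genuine infinite simple continued fraction. It therefore defines an irrational number lying in $S$.

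The map from $(q_n)$ to $(a_n)$ is injective, because $q_n=a_nq_{n-1}+q_{n-2}$ recovers the $q_n$ recursively from $q_{-1}=0,q_0=1$. Finally, an irrational number has a unique infinite simple continued fraction expansion, so distinct sequences $(a_n)$ give distinct $\gamma$. Composing these maps gives an injection $\{0,1\}^{\mathbb{N}}\hookrightarrow S$, hence $S$ is uncountable.

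The only step with real content is Step 1, and specifically checking that the combined residue class is coprime to its modulus so that Dirichlet's theorem applies. This uses $(u,v)=1$ together with the primality of $q_{n-1}$ and the fact that $q_{n-1}\equiv u\bmod v$ forces $q_{n-1}\nmid v$. The remaining steps are routine facts about continued fractions.
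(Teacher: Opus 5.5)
Your proof is correct. It rests on the same extension step as the paper's: the Chinese remainder theorem plus Dirichlet's theorem, giving a prime $q_n>q_{n-1}$ with $q_n\equiv u \bmod v$ and $q_n\equiv q_{n-2}\bmod q_{n-1}$.

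The difference is in how uncountability is obtained. The paper assumes an enumeration $c_0,c_1,\dots$ of $\mathcal{P}$ and diagonalizes, choosing $\tilde q_n>\max(\tilde q_{n-1},q_{n,n})$ so that the new sequence differs from $c_n$ at index $n$. You instead branch twice at every level and embed $\{0,1\}^{\mathbb{N}}$ injectively into $\mathcal{P}$.

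Your route has two advantages. It gives an explicit injection, hence the sharper conclusion $|S|=2^{\aleph_0}$. It also avoids the diagonal bookkeeping: as written, the paper imposes $\tilde q_n>q_{n,n}$ only for $n\ge 1$, and all sequences agree at index $0$, so nothing guarantees the new sequence differs from $c_0$. A shift of index repairs this. The diagonal argument is somewhat shorter.

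You also make explicit two points the paper leaves implicit. The first is $(q_{n-2},q_{n-1})=1$, which is needed alongside $(q_{n-1},v)=1$ and $(u,v)=1$ for the combined class to be coprime to $q_{n-1}v$, so that Dirichlet's theorem applies. The second is the injectivity of $\mathcal{P}\to S$ for fixed $a_0$, via the recursion $q_n=a_nq_{n-1}+q_{n-2}$ and uniqueness of infinite continued fraction expansions. The paper simply asserts that $S_1$ is in bijection with $\mathcal{P}$.
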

\begin{proof}
Consider the set $S_1 = \{\gamma = [a_0;a_1,a_2,\ldots] \in S ~:~ a_0=1\}$. 
The set $S_1$ is in bijection with the set $\mathcal{P}$.
We will show that $\mathcal{P}$ is uncountable.
To derive a contradiction, we suppose that $\mathcal{P}$ is countable and enumerate the elements of $\mathcal{P}$
by $\{c_0, c_1,\ldots\}$. 
We shall denote the sequence in $\mathcal{P}$ corresponding to $c_n$ by $(q_{n,i})_{i=-1}^{\infty}$.
We construct a new element of  $\mathcal{P}$ which is not equal to any of the $c_i$
for $i \in \mathbb{N}\bigcup \{0\}$.  
We set $\tilde{q}_{-1} =0$ and $\tilde{q}_0 =1$.
For $n \ge 1$, we inductively choose a prime $\tilde{q}_n$ such that 
\begin{itemize}
\item $\tilde{q}_n > \max(\tilde{q}_{n-1}, q_{n,n})$,
\item  $\tilde{q}_n \equiv u \bmod v$ \text{ and }
\item $\tilde{q}_{n} \equiv \tilde{q}_{n-2} \bmod \tilde{q}_{n-1}$.
\end{itemize}
For $n=1$, $\tilde{q}_{n-1} = 1$ and by Dirichlet's theorem on primes in arithmetic progressions,
there are infinitely many prime choices for $\tilde{q}_1 \equiv u \bmod v$. This means we can choose a prime $\tilde{q}_1$ satisfying the above conditions.
Further $(\tilde{q}_1, v )=1$.
For $n \ge 2$, if we have $(\tilde{q}_{n-1},v)=1$, by an application of Chinese remainder theorem
 and Dirichlet's theorem on primes in arithmetic progressions, we can choose a prime $\tilde{q}_n$
 satisfying the three conditions above.
This further implies that any such choice of $\tilde{q}_n$ satisfies $(\tilde{q}_n,v)=1$. 
Therefore the sequence $(\tilde{q}_i)_{i=-1}^{\infty} \in \mathcal{P}$ but not equal to any of the $c_i$  for $i \in \mathbb{N}\bigcup \{0\}$.  
By this version of Cantor's diagonal argument $S$  is uncountable, thereby proving the lemma.
\end{proof}
We now show that there exist infinitely many irrational numbers represented by elements of $S$
which are not Liouville. To do so, we define the irrationality measure of a real number and prove the following quantitative irrationality criterion.

\begin{defn}
Given a real number $\delta$ we define the irrationality measure of
$\delta$ to be
$$
\inf_{\lambda \in \mathbb{R}} \left\{ \lambda \ge 0 : \|\delta n \|< \frac{1}{n^{\lambda-1}} \text{ has finitely many solutions where } n \text{ is a positive integer } \right\}.
$$
We shall denote it by ${\rm Irr}(\delta)$.
\end{defn}
Recall that the irrationality measure of a Liouville number is infinity.
The following is a generalisation of lemma 1.35 of \cite{WZ}.
\begin{lem}\label{quantirr}
For a given $\delta \in \mathbb{R}$ and a positive real constant $M$, we suppose that there exists an infinite sequence of rationals $a_n/q_n$
such that
$$
|q_n\delta - a_n| =e^{-\psi(n)},~  n \in \mathbb{N}
$$
where $\psi$ is a  positive real valued arithmetic function which satisfies
\begin{enumerate}
\item $\lim_{n \to \infty} \psi(n)  = \infty$,
\item $ \limsup_{n \to \infty}  \frac{\psi(n+1)}{\psi(n)} \le M$, 
\item $\rho = \limsup_{n \to \infty} \frac{\log q_n}{\psi(n)} >0$. 
\end{enumerate}
Then ${\rm Irr}(\delta) \le 2(1+\rho)M$.
\end{lem}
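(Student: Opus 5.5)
We will follow the standard argument behind Lemma 1.35 of \cite{WZ}: take an arbitrary good rational approximation $p/q$ of $\delta$, choose an index $n$ adapted to the size of $q$, and compare $p/q$ with the given approximation $a_n/q_n$. The goal is to show that for every $\varepsilon>0$ and every $q$ large enough,
\[
\|q\delta\| \ge q^{-(2(1+\rho)M+\varepsilon)+1}.
\]
By the definition of ${\rm Irr}(\delta)$, this gives ${\rm Irr}(\delta)\le 2(1+\rho)M+\varepsilon$.

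\textbf{Step 1: asymptotic inequalities.} Fix $\varepsilon>0$. From (3), for all large $n$ we have $\log q_n \le (\rho+\varepsilon)\psi(n)$. From (2), for all large $n$ we have $\psi(n+1)\le (M+\varepsilon)\psi(n)$. By (1), $\psi(n)\to\infty$.

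\textbf{Step 2: choice of $n$.} Let $p/q$ be a rational with $q$ large and $|q\delta-p|=\|q\delta\|$. Let $n$ be the least index, beyond the threshold from Step 1, such that
\[
e^{-\psi(n)} < \tfrac{1}{2q}.
\]
Such an $n$ exists by (1). Minimality of $n$ gives $e^{-\psi(n-1)}\ge 1/(2q)$, that is, $\psi(n-1)\le \log 2q$. Hence
\[
\psi(n)\le (M+\varepsilon)\log 2q.
\]
If $\psi$ is not monotone, minimality only controls $\psi(n-1)$. This is still enough, because the upper bound needed is on $\psi(n)$ through $\psi(n-1)$, and it is valid once $n$ exceeds the threshold. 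As $q\to\infty$ the chosen $n\to\infty$ as well, since $\psi$ is finite at each fixed index.

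\textbf{Step 3: case split.}

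Case $a_n/q_n\ne p/q$. Then $1\le |a_nq-pq_n|$. By the triangle inequality,
\[
1 \le q|q_n\delta-a_n| + q_n|q\delta-p| < \tfrac12 + q_n\|q\delta\|,
\]
so $\|q\delta\| > 1/(2q_n)$. Using Step 1 and Step 2,
\[
q_n \le e^{(\rho+\varepsilon)\psi(n)} \le (2q)^{(\rho+\varepsilon)(M+\varepsilon)}.
\]
This bound is stronger than the one needed.

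Case $a_n/q_n=p/q$. Then $q_n = kq$ with $k\ge1$, so $\|q\delta\|$ is comparable to $e^{-\psi(n)}/k$. This case is the main obstacle, since $\|q\delta\|$ could be very small. The remedy is to compare instead with index $n+1$. If $a_{n+1}/q_{n+1}=a_n/q_n$ held for all large $n$, then $\delta$ would equal that rational, and the equality $|q_n\delta-a_n|=e^{-\psi(n)}>0$ would be contradicted. So for infinitely many consecutive pairs the two approximations differ. Following \cite{WZ}, one uses the pair $n$, $n+1$: at least one of $a_n/q_n$, $a_{n+1}/q_{n+1}$ differs from $p/q$, and $e^{-\psi(n+1)}\le e^{-\psi(n)}<1/(2q)$ is needed for the comparison. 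Here the monotonicity is ensured by choosing $n$ as above with the condition imposed on both indices. The argument of Case 1 applied to $n+1$ then yields
\[
\|q\delta\|\gg q_{n+1}^{-1} \ge e^{-(\rho+\varepsilon)\psi(n+1)} \ge q^{-(\rho+\varepsilon)(M+\varepsilon)^2-o(1)}.
\]

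\textbf{Step 4: conclusion.} Combining the cases gives
\[
\|q\delta\|\ge q^{-c}, \qquad c = (\rho+\varepsilon)(M+\varepsilon)^2+o(1),
\]
and hence an irrationality measure of at most $1+c$. Where this stays within the claimed $2(1+\rho)M$, the lemma follows after letting $\varepsilon\to0$. The factor $2(1+\rho)$ presumably absorbs the $+1$ shift in the definition of ${\rm Irr}$ and the loss incurred in the second case. If the extra factor of $M$ causes trouble, we would try first to redefine $n$ as the least index with $\psi(n)\ge \log(2q)$, which bounds $\psi(n)$ and $\psi(n+1)$ by $M\log 2q$ and $M^2\log 2q$ respectively. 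We would then check that the comparison in the second case can use a weaker requirement. This bookkeeping is where we expect the exponent $2(1+\rho)M$ to be reconciled.
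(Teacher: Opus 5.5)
Your Step 2 and your case $a_n/q_n\neq p/q$ are fine. The proposal breaks down in the case $a_n/q_n=p/q$, and as it stands it does not prove the lemma.

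\textbf{The detour through $n+1$ fails.} It rests on the claim that at least one of $a_n/q_n$, $a_{n+1}/q_{n+1}$ differs from $p/q$. Nothing in the hypotheses gives this for the particular $n$ you selected; knowing that infinitely many consecutive pairs differ does not help. For instance, interleave the convergents $p_k/Q_k$ of $\delta=[0;5,5,5,\ldots]$ with the pairs $(2p_k,2Q_k)$. Conditions (1)--(3) hold, and for $q=Q_k$ the index your rule selects and its successor both represent $p_k/Q_k=p/q$.

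\textbf{The exponent is also too weak.} Even granting the claim, you obtain an exponent of about $1+\rho M^2$. This exceeds $2(1+\rho)M$ whenever $M>2(1+\rho)/\rho$, which is the situation in the paper's application with $M=107.33$. Your Step 4 leaves this unresolved.

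\textbf{What you missed.} The case $a_n/q_n=p/q$ is harmless. There you have the exact identity $\|q\delta\|=|q\delta-p|=\frac{q}{q_n}|q_n\delta-a_n|=\frac{q}{q_n}e^{-\psi(n)}$. Both factors are already controlled by your own Steps 1 and 2:
\[
e^{-\psi(n)}\ge(2q)^{-(M+\varepsilon)}, \qquad q_n\le(2q)^{(\rho+\varepsilon)(M+\varepsilon)}.
\]
Hence $\|q\delta\|\ge q\,(2q)^{-(1+\rho+\varepsilon)(M+\varepsilon)}$. Together with your first case, and with $M\ge1$ (which (1) forces), this gives ${\rm Irr}(\delta)\le(1+\rho)M$, which is better than claimed.

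\textbf{The paper's argument.} It writes $(q\delta-a)q_n=q(q_n\delta-a_n)+(qa_n-aq_n)$ and obtains $|\delta-a/q|\ge 1/(q_ne^{\psi(n)})$ in both cases. When $qa_n\neq aq_n$ this uses $2q\le e^{\psi(n)}$; otherwise it holds with equality. By minimality of $n$ this is at least $e^{-(1+\rho+\epsilon)(M+\epsilon)\psi(n-1)}\ge(2q)^{-((1+\rho)M+\epsilon_1)}$. Finally it uses the crude estimate $(2q)^{-X}>q^{-2X}$ for $q>2$, which is the actual source of the factor $2$ you were trying to explain. No second index and no monotonicity of $\psi$ are needed.
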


\begin{proof}
By condition (2) and (3), we have for any $\epsilon > 0$, an $N_{\epsilon} \in \mathbb{N}$ such that for $n \ge N_{\epsilon}$
$$
 \frac{\psi(n)}{\psi(n-1)} \le (M+\epsilon) ~~\text{ and }~~ q_n  \le e^{\psi(n)(\rho +\epsilon)}.
$$
Let $a$ be an integer.
Since $e^{\psi(n)} \to \infty$,  given any  $q \in \mathbb{N}_{> 2}$ with $2q \ge e^{\psi(N_{\epsilon})}$ we choose the smallest
natural number $n \ge N_{\epsilon}+1$ and  such that  $ 2q \le e^{\psi(n)}$.
We have
$$
|(q\delta - a)q_n| = |q(q_n\delta - a_n) + (qa_n - aq_n)|.
$$
If $|qa_n-aq_n| \neq 0$, then it is at least $1$.
In this case
$$
|(q\delta - a)q_n|  \ge 1 - qe^{-\psi(n)}.
$$
If $|qa_n-aq_n| = 0$, then 
$$
|(q\delta - a)q_n| = qe^{-\psi(n)}.
$$
Since $2q \le e^{\psi(n)}$, we have
$$
\bigg| \delta - \frac{a}{q} \bigg| \ge \frac{1}{q_ne^{\psi(n)}}
\ge  \frac{1}{e^{(1+\rho + \epsilon)\psi(n)}}
 \ge \frac{1}{e^{ (1+\rho + \epsilon)(M +\epsilon)\psi(n-1)}}.
$$
Since $2q \ge e^{\psi(n-1)}$, 
if we set $(1+\rho)\epsilon + M\epsilon +\epsilon^2$ to be $\epsilon_1$, we get
$$
\bigg| \delta - \frac{a}{q} \bigg| \ge \frac{1}{(2q)^{((1+\rho)M + \epsilon_1)}} > \frac{1}{q^{2((1+\rho)M + \epsilon_1)}}.
$$
Since $a$ is an arbitrary integer, we have that 
$$
\|\delta q\| > \frac{1}{q^{2((1+\rho)M + \epsilon_1)-1}}.
$$
For any given $\epsilon$, the above inequality holds for $q \ge e^{\psi(N_{\epsilon})}/2$.
So the inequality 
$$
\|\delta q\| < \frac{1}{q^{2((1+\rho)M + \epsilon_1)-1}},
$$
has finitely many solutions $q \in \mathbb{N}$.
Therefore ${\rm Irr}(\delta) \le 2((1+\rho)M + \epsilon_1)$. Since $\epsilon > 0$ is arbitrary,
we have the result.
\end{proof}
We recall here a theorem due to T. Xylouris \cite{TX}.
\begin{thm}
Let $b$ and $s >1$ be two positive integers with $(b,s)=1$. There exists
a constant $Q_0>0$ such that the least
prime congruent to $b \bmod s $ is less than $s^{5.18}$ for $s \ge Q_0$.
\end{thm}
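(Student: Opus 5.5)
This result is quoted from Xylouris \cite{TX} and is not proved in the paper. A self-contained proof would follow the standard quantitative form of Linnik's method; I only sketch that route and would cite \cite{TX} for the numerical constant.

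\emph{Reduction.} Fix $s$ large and $(b,s)=1$, and set $x=s^{\lambda}$ with $\lambda=5.18$. It suffices to show that a weighted prime count is positive. Concretely, I would show
$$\sum_{\substack{x/2<n\le x\\ n\equiv b \bmod s}}\Lambda(n)\,w(n)>0$$
for a smooth nonnegative weight $w$ supported on $[x/2,x]$. Prime powers contribute only $\ll x^{1/2}\log x$, which is negligible.

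By orthogonality of Dirichlet characters and the explicit formula, the weighted sum equals
$$\frac{1}{\varphi(s)}\Big(\hat w(1)x-\sum_{\chi \bmod s}\bar\chi(b)\sum_{\rho_\chi}\hat w(\rho_\chi)x^{\rho_\chi}\Big),$$
where $\rho_\chi$ runs over the nontrivial zeros of $L(s,\chi)$ and $\hat w$ is a Mellin-type transform of $w$. Smoothing makes $\hat w(\rho)$ decay rapidly in $|\operatorname{Im}\rho|$, so only zeros of height $\ll s^{O(1)}$ matter. The task is then to show that the zero sum is smaller than the main term.

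\emph{Three standard inputs.}
\begin{enumerate}
\item A zero-free region for $\prod_{\chi}L(s,\chi)$ of the form $\sigma>1-c_1/\log(s(|t|+2))$. This region contains at most one exceptional real zero $\beta_1$, attached to a real character $\chi_1$.
\item A log-free zero-density estimate,
$$\sum_{\chi}N(\sigma,T,\chi)\ll (sT)^{c_2(1-\sigma)},$$
with explicit $c_2$ (Xylouris takes $c_2$ close to $12/5$ in the relevant range).
\item The Deuring--Heilbronn repulsion. If $\beta_1=1-\eta/\log s$ with $\eta$ small, every other zero satisfies $\beta<1-c_3\log(1/\eta)/\log s$.
\end{enumerate}

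\emph{Bounding the zero sum.} Write $\rho=1-\mu/\log s$. Each zero contributes about $x^{-\mu/\log s}=e^{-\lambda\mu}$ relative to the main term. Splitting by $\mu$ and using input 2, the zero sum is at most
$$\ll\sum_{\mu\ge\mu_0}e^{(c_2-\lambda)\mu}.$$
This is small provided $\lambda>c_2$ and $\mu_0$ is a large enough constant. For zeros with $\mu$ bounded there are only $O(1)$ of them, and input 1 bounds $\mu$ from below.

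This is handled by cases.
\begin{itemize}
\item If no exceptional zero exists, or $\eta$ is not tiny, the $O(1)$ zeros near $1$ are controlled by explicit zero-free and zero-counting estimates.
\item If $\eta$ is tiny, the term from $\beta_1$ nearly cancels the main term. It leaves a positive quantity of size about $(1-x^{\beta_1-1})\approx \lambda\eta$. Input 3 pushes all other zeros far left, so their total contribution is $\ll \eta^{c}$ with $c>1$ once $\lambda$ is large enough. That is smaller than $\lambda\eta$.
\end{itemize}

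\emph{Main obstacle.} The hard step is the numerical optimisation that brings $\lambda$ down to $5.18$. This requires sharp explicit versions of inputs 1--3 and careful choice of the smoothing weight in every range of $\eta$. That optimisation is the content of \cite{TX}, and I would defer to it for the constant $5.18$.
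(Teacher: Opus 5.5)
Your proposal matches the paper: the paper gives no proof of this statement and simply quotes it from Xylouris \cite{TX}, and you likewise defer to \cite{TX} for the constant $5.18$. Your outline is the standard Linnik--Heath-Brown framework on which \cite{TX} rests: Page's exceptional zero, a log-free zero-density estimate, and Deuring--Heilbronn repulsion, fed into a smoothed explicit formula.
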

We now construct a subset $T$ of $S$.
 To do so, we begin with a subset $\mathcal{P}_1$ of $\mathcal{P}$ defined as follows:
\begin{eqnarray*}
\mathcal{P}_1:=
\left\{ \mathcal{A} = (q_i)_{i=-1}^{\infty} 
 \in \mathcal{P} ~:~  \begin{aligned} \text{ there exists } n_{\mathcal{A}} \ge 3 \text{ such that for all } n \ge n_{\mathcal{A}}, \\  ~q_{n} \text{ is the least prime satisfying  } q_n \equiv u \bmod v\\ \text{ and } q_n \equiv q_{n-2} + q_{n-1} \bmod q_{n-1}q_{n-2}  \end{aligned}
 \right\}.
\end{eqnarray*}
Given $\mathcal{A} = (q_i)_{i=-1}^{\infty} 
 \in \mathcal{P}$, for $n \ge 3$, $q_{n-1}$ and $q_{n-2}$ are at least $2$.
 We also have $q_{n-1} > q_{n-2}$. Then
 $$q_{n-1} <q_{n-2} + q_{n-1} < 2q_{n-1} \le q_{n-1}q_{n-2}.$$
 This implies that the least prime congruent to $u \bmod v$ and $q_{n-2} + q_{n-1} \bmod q_{n-1}q_{n-2}$ is strictly greater than $q_{n-1}$. Therefore $\mathcal{P}_1$ is non-empty.
The Set $T$ is now given by 
\begin{eqnarray*}
T := \left\{ \gamma = [a_0;a_1,a_2,\ldots] ~:~ \begin{aligned}  a_0 \in \mathbb{N}, \text{ there exists  a }  (q_i)_{i=-1}^{\infty} \in \mathcal{P}_1 \text{ such that} \\ \text{ for all } n \ge 1,  a_n = (q_n - q_{n-2})/q_{n-1}  \end{aligned} \right\}.
\end{eqnarray*}
Given $(q_i)_{i=-1}^{\infty} 
 \in \mathcal{P}$, since $q_n \to \infty$ as $n \to \infty$,  $q_n \ge Q_0$ for all $n$ sufficiently large. 
\begin{lem}
The set $T$ does not contain any Liouville numbers.
\end{lem}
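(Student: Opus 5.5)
Fix $\gamma=[a_0;a_1,a_2,\ldots]\in T$ with associated sequence $\mathcal{A}=(q_i)\in\mathcal{P}_1$, and take $n\ge n_{\mathcal{A}}$ so large that $q_{n-1}q_{n-2}v\ge Q_0$. The plan is to apply \lemref{quantirr} to $\delta=\gamma$, using the convergents $p_n/q_n$ and setting $e^{-\psi(n)}:=|q_n\gamma-p_n|$. It then suffices to verify conditions (1)--(3) with some finite $M$ and $\rho$, because this gives ${\rm Irr}(\gamma)<\infty$, whereas a Liouville number has infinite irrationality measure.

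\textbf{Growth of the denominators.} By the Chinese remainder theorem, the two conditions $q_n\equiv u \bmod v$ and $q_n\equiv q_{n-2}+q_{n-1}\bmod q_{n-1}q_{n-2}$ combine into a single residue class $b\bmod s$ with $s=vq_{n-1}q_{n-2}$. Here $(b,s)=1$, since $(q_{n-1}q_{n-2},v)=1$, $(u,v)=1$, and $q_{n-2}+q_{n-1}$ is coprime to both primes $q_{n-1}\neq q_{n-2}$. Xylouris' theorem then gives
\[
q_n<(vq_{n-1}q_{n-2})^{5.18}\le (v q_{n-1}^2)^{5.18}.
\]
Taking logarithms, $\log q_n\le 10.36\log q_{n-1}+O_v(1)$. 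Since $q_n>q_{n-1}$, we get the two-sided control $\log q_{n-1}<\log q_n\le C\log q_{n-1}$ for large $n$, with $C$ any constant exceeding $10.36$.

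\textbf{Verifying (1)--(3).} Standard continued fraction theory gives
\[
\frac{1}{2q_{n+1}}<|q_n\gamma-p_n|<\frac{1}{q_{n+1}},
\]
so $\log q_{n+1}<\psi(n)<\log q_{n+1}+\log 2$. Condition (1) holds because $q_n\to\infty$. For condition (2), the ratio satisfies
\[
\frac{\psi(n+1)}{\psi(n)}\le \frac{\log q_{n+2}+\log 2}{\log q_{n+1}}\le C+o(1),
\]
so we may take $M=C$. For condition (3), $\log q_n/\psi(n)$ lies between $\log q_n/(\log q_{n+1}+\log 2)$ and $1$, and the lower bound is at least $1/C-o(1)$. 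Hence $\rho\in[1/C,1]$, which is positive. \lemref{quantirr} now yields ${\rm Irr}(\gamma)\le 2(1+\rho)M\le 4C<\infty$, so $\gamma$ is not Liouville.

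\textbf{Main obstacle.} The delicate step is checking that the hypotheses of Xylouris' theorem hold. This requires $s\ge Q_0$, which is automatic for large $n$, and $(b,s)=1$; the latter needs the inductive fact $(q_i,v)=1$ together with the primality and distinctness of $q_{n-1}$ and $q_{n-2}$. One must also check that the least prime in this class agrees with the definition of $\mathcal{P}_1$, in particular that $q_n\equiv q_{n-2}\bmod q_{n-1}$ holds, which follows from the congruence modulo $q_{n-1}q_{n-2}$. Once these points are settled, the rest is routine bookkeeping of logarithms.
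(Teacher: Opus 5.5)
Your proposal is correct and follows the paper's proof. Both arguments use Xylouris' bound on the least prime in the combined residue class modulo $vq_{n-1}q_{n-2}$ to get $\log q_{n+1}\le 10.36\log q_n+O_v(1)$, and then apply \lemref{quantirr} with $\psi(n)=-\log|q_n\gamma-p_n|$. Your bookkeeping differs only in small ways: you compare $\log q_{n+2}$ directly with $\log q_{n+1}$ and explicitly check $(b,s)=1$, which gives $M\approx 10.36$ instead of the paper's $107.33$.
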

\begin{proof}
Let $\gamma \in T$ and $p_n/q_n$ be the n-th convergent to $\gamma$.
For $n$ sufficiently large (say $n \ge N_{\gamma}$), 
we have that 
$$
q_{n} \text{ is the least prime satisfying  } q_n \equiv u \bmod v\\ \text{ and } q_n \equiv q_{n-2} + q_{n-1} \bmod q_{n-1}q_{n-2}
$$
and by the above result of Xylouris, we know that the least prime 
satisfying
 $$
 q_{n-1} +q_{n} \bmod q_{n-1}q_{n} \text{ and } u \bmod v
$$
is at most $(vq_{n-1}q_{n})^{5.18} \le (vq_{n})^{10.36} $. 
Therefore for $n > N_{\gamma}$, we have $q_{n+1} \le (vq_{n})^{10.36}$.
We know
that for any convergent $p_n/q_n$ of $\gamma \in T$, we have
$$
 \frac{1}{q_n+q_{n+1}} <  |q_n \alpha - p_n| < \frac{1}{q_{n+1}}.
$$
So if we set $\psi(n) = - \log  |q_n \alpha - p_n|$, we have
$$
\log q_{n+1} <  \psi(n) < \log( q_n + q_{n+1}).
$$
Therefore for $n >  N_{\gamma}$, we have
$$
\frac{\psi(n+1)}{\psi(n)} < \frac{ \log( q_{n+1} + q_{n+2})}{\log q_{n+1}} \le \frac{\log ( (vq_{n})^{10.36}+ (v(vq_n)^{10.36})^{10.36})}{\log  q_{n}} \le   \frac{\log (2v^{10.36} (vq_{n})^{107.33})}{\log  q_{n}} .
$$
So we set $M=107.33$. Finally we have 
$$
\frac{\log q_n}{\psi(n)}  > \frac{\log q_n}{\log (q_n + q_{n+1})} > \frac{\log q_n}{\log (q_n + (vq_{n})^{10.36})} >  \frac{\log q_n}{\log 2v^{10.36} +  \log q_{n}^{10.36}}.
$$
So we get that $\rho > 1/(10.36)$.
Further
$$
\frac{\log q_n}{\psi(n)}  < \frac{\log q_n}{\log (q_{n+1})} < 1.
$$

This implies that ${\rm Irr}(\alpha) \le 429.32$. Since the irrationality
measure of a Liouville number is $\infty$, these numbers
cannot be Liouville.
\end{proof}

\section*{Acknowledgements}
\noindent
The authors would like to thank Prof. Sanoli Gun for suggesting the problem. The authors  also thank Prof. Sanoli Gun and Prof. Purusottam Rath for their continued guidance throughout
this project. The first author would like to thank the NBHM postdoctoral fellowship program (fellowship number: 0204/10/(8)/2023/R\&D-II/2778) and the Chennai Mathematical Institute for 
financial support. Both the authors are  grateful to the Chennai Mathematical support for academic support. The second author would like to thank IISER Thiruvananthapuram and the
 Inspire Faculty fellowship program (fellowship number: DST/INSPIRE/04/2022/001064) for financial support. Both the authors would like to thank IISER Thiruvananthapuram for the
 wonderful working environment.

\end{document}